\newtheorem{Thm}{Theorem}[section]
\newtheorem{Lem}[Thm]{Lemma}
\newtheorem{Prop}[Thm]{Proposition}
\newtheorem{Def}[Thm]{Definition}
\newcommand{\NN}{\mathbb{N}}
\newcommand{\ZZ}{\mathbb{Z}}
\newcommand{\RR}{\mathbb{R}}
\newcommand{\Iff}{{\,\Leftrightarrow\,}}
\newcommand{\Cardinality}[1]{{|#1|}}
\newcommand{\Indicator}[1]{{\mathbf{1}_{\EnumSet{#1}}}}
\newcommand{\Complement}[1]{{#1^c}}
\newcommand{\SymDiff}{{\bigtriangleup\,}}
\newcommand{\DescSet}[2]{{\{#1\mid{}#2\}}}
\newcommand{\LRDescSet}[2]{{\left\{#1\,\middle|\,#2\right\}}}
\newcommand{\EnumSet}[1]{{\{#1\}}}
\newcommand{\LREnumSet}[1]{{\left\{#1\right\}}}
\newcommand{\Modulus}[1]{{|#1|}}
\newcommand{\LRModulus}[1]{{\left|#1\right|}}
\newcommand{\BigModulus}[1]{{\Bigl|#1\Bigr|}}
\newcommand{\TensorTimes}{{\otimes}}
\newcommand{\D}{{\mathrm{d}}}
\newcommand{\Leb}{{\mathcal{L}}}
\newcommand{\Dimension}{{d}}
\newcommand{\RRd}{{\RR^\Dimension}}
\newcommand{\RRplus}{\RR_+}
\newcommand{\Dirac}[1]{{\delta_{#1}}}
\newcommand{\EuclideanDistance}{{\theta}}
\newcommand{\DistanceBetween}[2]{{\EuclideanDistance(#1,#2)}}
\newcommand{\Ball}[1]{{B(#1)}}
\newcommand{\Domein}{{\Lambda}}
\newcommand{\BoundedBorelSets}{{\mathcal{B}_b}}
\newcommand{\StateSpace}{{S}}
\newcommand{\SDom}{{\Delta}}
\newcommand{\StateBorelSets}{{\mathcal{S}}}
\newcommand{\ProjectionBoundedBorelSets}{{\mathcal{S}_{bp}}}
\newcommand{\Conf}[1]{\Omega_{#1}}
\newcommand{\Real}[1]{\omega_{#1}}
\newcommand{\RealBis}[1]{\omega_{#1}'}
\newcommand{\BC}[1]{\gamma_{#1}}
\newcommand{\SigmaAlgebra}[1]{{\mathcal{F}_{#1}}}
\newcommand{\ProductAlgebra}[2]{{\mathcal{F}_{#1}^{\otimes{}#2}}}
\newcommand{\GilbertGraph}[1]{{\mathcal{G}(#1)}}
\newcommand{\ConnectedIn}[1]{{\,\xleftrightarrow{\text{in }{#1}}\,}}
\newcommand{\SymbolPPLaw}{\mathcal{P}}
\newcommand{\GenericPP}[2]{{\SymbolPPLaw^{#1}_{#2}}}
\newcommand{\Expect}[1]{{\mathbb{E}_{#1}}}
\newcommand{\RadiusMeasure}{{Q}}
\newcommand{\SymbolPoisson}{\text{poi}}
\newcommand{\PoissonPP}[1]{{\GenericPP{\SymbolPoisson}{#1}}}
\newcommand{\PercThreshold}[1]{{\alpha_c(#1)}}
\newcommand{\IntCondRad}[1]{{\rho(#1)}}
\newcommand{\PairCorrelationFunction}{\rho}
\newcommand{\Hamiltonian}[1]{{H_{#1}}}
\newcommand{\SymbolPartFunPoisson}{\mathbf{Z}}
\newcommand{\PartFunPoisson}[1]{{\SymbolPartFunPoisson(#1)}}
\newcommand{\SymbolGibbs}{gb}
\newcommand{\GibbsPPActivity}{{\lambda}}
\newcommand{\GibbsPPRadiusMeasure}{{\RadiusMeasure}}
\newcommand{\GibbsPPParams}{{\GibbsPPActivity,\GibbsPPRadiusMeasure}}
\newcommand{\GibbsPP}[1]{{\GenericPP{\SymbolGibbs(\GibbsPPActivity)}{#1}}}
\newcommand{\GibbsStates}{{\mathbf{G}(\GibbsPPActivity{})}}
\newcommand{\DacPoiPPIntensity}{{\overline{\alpha}}}
\newcommand{\DacPoiPPRadiusMeasure}{{\overline{\RadiusMeasure}}}
\newcommand{\DacPoiPPParams}{{\DacPoiPPIntensity,\DacPoiPPRadiusMeasure}}
\newcommand{\Dacf}[1]{{\GenericPP{\text{dac}}{#1}}}
\newcommand{\ThinPoiPPIntensity}{{\alpha}}
\newcommand{\ThinPoiPPRadiusMeasure}{{\GibbsPPRadiusMeasure}}
\newcommand{\ThinPoiPPParams}{{\ThinPoiPPIntensity,\ThinPoiPPRadiusMeasure}}
\newcommand{\BinaryLinearisation}{B}
\newcommand{\Order}{{\preccurlyeq}}
\newcommand{\LTE}{{\preccurlyeq}}
\newcommand{\LinearizedMeasure}{\mathcal{Q}^\star}
\newcommand{\PointThinProba}[3]{{p_{#1}^{\text{s}}(#2\mid{}#3)}}
\newcommand{\JointThinProba}[3]{{p_{#1}^{\text{j}}(#2\mid{}#3)}}
\newcommand{\CouplingThinOne}[1]{{\GenericPP{\text{thin}}{#1}}}
\newcommand{\XPlusEps}{{X_\varepsilon^+}}
\newcommand{\CouplingThinTwo}[1]{{\GenericPP{\text{thin2}}{#1}}}
\newcommand{\InfluenceZone}{{\Gamma}}
\newcommand{\CouplingDisAgZone}[1]{{\GenericPP{\text{da-zone}}{#1}}}
\newcommand{\CouplingDisAgRec}[1]{{\GenericPP{\text{da-rec}}{#1}}}
\newcommand{\HamiltonianCRCM}[1]{{H^{\scriptscriptstyle{crcm}}_{#1}}}
\newcommand{\HamiltonianQuermass}[1]{{H^{\scriptscriptstyle{quer}}_{#1}}}
\newcommand{\Area}{\operatorname*{Area}}
\newcommand{\Perimeter}{\operatorname*{Per}}
\newcommand{\EulerCaracteristic}{\operatorname*{Euler}}
\newcommand{\Abstract}{
\begin{abstract}
We generalise disagreement percolation to Gibbs point processes of balls with varying radii.
This allows to establish the uniqueness of the Gibbs measure and exponential decay of pair correlations in the low activity regime by comparison with a sub-critical Boolean model.
Applications to the Continuum Random Cluster model and the Quermass-interaction model are presented.
At the core of our proof lies an explicit dependent thinning from a Poisson point process to a dominated Gibbs point process.
\end{abstract}
}
\newcommand{\TitleFull}{Disagreement percolation for Gibbs ball models}
\newcommand{\TitleShort}{Disagreement for ball models}
\newcommand{\AuthorsFull}{
Christoph Hofer-Temmel

\thanks{math@temmel.me}
\hspace{2em}
Pierre Houdebert
\thanks{pierre.houdebert@gmail.com}
}
\newcommand{\AuthorsShort}{Hofer-Temmel \& Houdebert}
\newcommand{\Keywords}{Keywords:
continuum random cluster model,
%Quermass-interaction model,
% Widom-Rowlinson model,
disagreement percolation,
dependent thinning,
Boolean model,
stochastic domination,
phase transition,
unique Gibbs state,
exponential decay of pair correlation
% exponential decay of correlations.
}
\newcommand{\Head}{
 \maketitle
 \Abstract{}
 \Keywords{}\\\MSC{}
 \tableofcontents
% \listoffigures
% \listoftables
}
\title{\TitleFull{}}
\author{\AuthorsFull{}}
\date{}
\begin{document}

\Head{}

\section{Introduction}
\label{sec_intro}

\par
The class of Gibbs models is a rich class of point processes, where a model is defined through its microscopic properties.
The modern formalism is due to Dobrushin~\cite{Dobrushin__DescriptionOfARandomFieldByMeansOfConditionalProbabilitiesAndConditionsForItsRegularity_TVP_1968},
Lanford and Ruelle~\cite{Lanford_Ruelle__ObservablesAtInfinityAndStatesWithShortRangeCorrelationsInStatisticalMechanics_CMP_1969,
Ruelle__SuperstableInteractionsInClassicalStatisticalMechanics_CMP_1970},
who gave their names to the DLR equations defining the Gibbs states through their conditional probabilities.
A classical question is the question of uniqueness of Gibbs states having the same conditional probabilities.
One expects uniqueness at low activities and non uniqueness, usually referred to as \emph{phase transition}, at large activities.
This was proven for example for the well-known Widom-Rowlinson model~\cite{Ruelle__ExistenceOfAPhaseTransitionInAContinuousClassicalSystem,
Chayes_Chayes_Kotecky__TheAnalysisOfTheWidomRowlinsonModelByStochasticGeometricMethods__CMP_1995}.
But uniqueness at low activities is still unproven for most Gibbs model without finite range interaction, such as the Continuum Random Cluster model~\cite{Dereudre_Houdebert__InfiniteVolumeContinuumRandomClusterModel__EJP_2015}.

\par
The aim of this paper is to show uniqueness of the Gibbs state for a large class of Gibbs interactions.
The method used is a continuum extension of the classical disagreement percolation technique introduced by van~den~Berg and Maes~\cite{VanDenBerg_Maes__DisagreementPercolationInTheStudyOfMarkovFields__AP_1994}.
This technique has been recently used to prove uniqueness in the case of the hard-sphere model~\cite{HoferTemmel__DisagreementPercolationForTheHardSphereModel}.
The present paper generalises this construction to the case of Gibbs point processes of balls.
Two natural restrictions are a stochastic domination of the Gibbs point process by a Poisson point process and a locality assumption about the interaction respecting the geometric structure imposed by the balls.
Those are the only assumptions needed for the uniqueness result of this paper.
In particular, the interaction is not restricted to a pair-interaction and may be of higher-order.

\par
The idea behind disagreement percolation is to construct a coupling, named disagreement coupling, between three point processes on a bounded domain.
Two marginals are the studied Gibbs point process with different boundary conditions.
The third marginal is a dominating Poisson point process.
The key property of this coupling is the control of points differing between the two Gibbs instances by the dominating Poisson point process connecting to the boundary.
Therefore, the Poisson point process controls the influence of the boundary conditions.
Interpreting the Poisson point process as a Boolean percolation model, it follows that in the sub-critical percolation regime, this influence is small.
Hence, we derive the uniqueness of the Gibbs phase for activities lower than the critical percolation threshold of the dominating Poisson point process.
In some cases we derive the exponential decay of the pair correlation, proved as a direct consequence of the exponential decay of connectivity in the sub-critical Boolean model.
Our results apply to several Gibbs models such as the continuum random cluster model~\cite{Dereudre_Houdebert__InfiniteVolumeContinuumRandomClusterModel__EJP_2015} or a simplified Quermass-interaction model~\cite{Kendall_VanLieshout_Baddeley__QuermassInteractionProcessesConditionsForStability__AAP_1999}, as well as every Gibbs model with finite range interaction and dominated by a Poisson point process, such as the Strauss model~\cite{Strauss__AModelForClustering_Miometrika_1975}.

\par
The construction of the disagreement coupling is done by recursion in Section~\ref{sec_proof_dacf_existence} and relies strongly on the measurability in the boundary conditions of a coupling between the Gibbs point process and a dominating Poisson point process.
The classic constructions of dominating couplings~\cite{Preston__SpatialBirthAndDeathProcesses__BIIS_1975,Georgii_Kueneth__StochasticComparisonOfPointRandomFields__JAP_1997} are implicit and do not concern themselves with measurability.
In Section~\ref{sec_dep_thinning} we derive a new coupling, namely a dependent thinning from the dominating Poisson point process with explicitly given thinning probabilities.
The thinning probabilities are expressed in terms of the derivative of the free energy of the Gibbs point process.

\par
This paper is organised as follows.
Section~\ref{sec_prelim} introduces the set-up of the paper.
Section~\ref{sec_results} presents the results: uniqueness of the Gibbs state, existence of the disagreement coupling and exponential decay of correlation.
Section~\ref{sec_applications} discusses applications to different Gibbs models, showing that they satisfy the assumptions of our theorems.
A discussion of possible extension, generalisations, and connections to related methods is in Section~\ref{sec_discussion}.
We give an explicit expression for the thinning probabilities in Section~\ref{sec_dep_thinning} and construct the disagreement coupling in Section~\ref{sec_proof_dacf_existence}.
The remaining sections contain proofs of the other statements.

\section{Preliminaries}
\label{sec_prelim}

\subsection{Space}
\label{sec_space}

Let $\DistanceBetween{x}{y}$ be the Euclidean distance between points $x,y \in\RRd$.
For two Borel sets $\Domein_1$ and $\Domein_2$ of $\RRd$, let $\DistanceBetween{\Domein_1}{ \Domein_2}$ be the usual infimum of the pairwise distances between points in $\Domein_1$ and points in $\Domein_2$.
We abbreviate $\DistanceBetween{x}{\Domein}:=\DistanceBetween{\EnumSet{x}}{\Domein}$.
Let $\Leb$ be the Lebesgue measure on $\RRd$.

\par
Consider the state space $\StateSpace:=\RRd\times\RRplus$.
Let $\StateBorelSets$ be the Borel sets of $\StateSpace$.
Let $\ProjectionBoundedBorelSets$ be those Borel sets of $\StateSpace$ whose projection onto $\RRd$ is a bounded Borel set\footnote{The subscript ``bp'' stands for ``bounded projection''.}.
Let $\Conf{}$ be the set of locally finite points configurations on $\StateSpace$, meaning that for each configuration $\Real{}\in\Conf{}$ and each bounded subset $\Domein$ of $\RRd$, the cardinality of the intersection
$\Cardinality{\Real{} \cap (\Domein\times\RRplus )}$ is finite.
For $\SDom \in \StateBorelSets$, denote by $\Conf{\SDom}$ the set of configurations contained in $\SDom$.
For a configuration $\Real{}$, write $\Real{\SDom}$ for $\Real{}\cap\SDom$.
Let $\SigmaAlgebra{}$ be the Borel $\sigma$-algebra on $\Conf{}$ generated by the counting variables.
For $\SDom\in\StateBorelSets$, consider the sub $\sigma$-algebra $\SigmaAlgebra{\SDom}$ generated by the events
\begin{equation*}
\DescSet{\Real{}\in\Conf{}}{\Real{\SDom}\in{}E}, \ E \in \SigmaAlgebra{}
\,.
\end{equation*}
Let $\BoundedBorelSets$ be the bounded Borel sets\footnote{The subscript ``b'' stands for ``bounded''.} of $\RRd$.
In the case of $\SDom = \Domein\times\RRplus$ with $\Domein\in\BoundedBorelSets$, we abbreviate $\Real{\Domein\times\RRplus}$, $\Conf{\Domein\times\RRplus}$ and $\SigmaAlgebra{\Domein\times\RRplus}$  as $\Real{\Domein}$, $\Conf{\Domein}$ and $\SigmaAlgebra{\Domein}$ respectively.

\par
We write $X:=(x,r)\in\StateSpace$.
The \emph{closed ball} of radius $r$ around $x$ is $\Ball{x,r}$ or $\Ball{X}$.
We write $\Ball{\Real{}}:=\cup_{X \in \Real{}} B(X)$.
We abbreviate $\Real{}\cup\EnumSet{X}$ to $\Real{}\cup{}X$.
A configuration $\Real{}\in\Conf{}$ has an associated \emph{Gilbert graph}
$\GilbertGraph{\Real{}}$ with vertex set $\Real{}$ and an edge between $X,Y\in\Real{}$ whenever $\Ball{X}\cap\Ball{Y}\not=\emptyset$.
We say that $X,Y\in\StateSpace$ are \emph{connected by $\Real{}$}, written $X\ConnectedIn{\Real{}}Y$, whenever there is a path in $\GilbertGraph{\Real{}\cup\EnumSet{X,Y}}$ between $X$ and $Y$.
For a non-empty Borel set $\emptyset \not=\Domein\subseteq\RRd$ and a configuration $\RealBis{}$, we write $\Domein\ConnectedIn{\Real{}}\RealBis{}$, if there exists $x\in\Domein$ and $Y\in\RealBis{}$, such that $(x,0)\ConnectedIn{\Real{}}Y$.
This definition extends naturally to connectedness in $\Real{}$ between two non-empty Borel sets of $\RRd$ or between two non-empty configurations.

\subsection{Point processes}
\label{sec_pp}

\par
This work only considers \emph{simple point process} (short: PP).
It treats a PP as a \emph{locally finite random subset of points} of $\StateSpace{}$ instead of as a random measure.
Hence, the law of a PP $\GenericPP{}{}$ is a probability measure on $\Conf{}$ with the canonical variable $\xi$.
Unless there is ambiguity, we refer to a PP by its law and vice-versa.

\par
The most classical PP is the \emph{Poisson point process}.
In this paper we consider only the special case of the Poisson PP with intensity measure
$\alpha\Leb\TensorTimes\GibbsPPRadiusMeasure$, where $\alpha$ is a positive real number called \emph{(spatial) intensity} and $\RadiusMeasure$ is a probability measure on $\RRplus$, called the \emph{radius measure}.
The law of the Poisson PP\footnote{The superscript ``poi'' stands for ``Poisson''.} denoted by $\PoissonPP{\alpha,\RadiusMeasure}$, and the projection on $\SDom\in\StateBorelSets$ is $\PoissonPP{\SDom, \alpha , \RadiusMeasure}$.
An extensive study of the Poisson PP can be found in~\cite{Daley_VereJones__AnIntroductionToTheTheoryOfPointProcesses_II__Springer_2008}.

\par
The percolation properties of $\PoissonPP{\alpha,\RadiusMeasure}$ play an important role in this work.
A configuration $\Real{} \in \Conf{}$ \emph{percolates} if its Gilbert graph $\GilbertGraph{\Real{}}$ contains at least one unbounded (or infinite) connected component.
This is also called the \emph{Boolean model} of percolation.
For a radius measure $\RadiusMeasure$, let $\PercThreshold{\Dimension,\RadiusMeasure}\in[0,\infty]$ be the \emph{threshold intensity} separating the sub-critical ($\PoissonPP{\alpha,\RadiusMeasure}$--almost-never percolating) and super-critical ($\PoissonPP{\alpha,\RadiusMeasure}$--a.s. percolating) phases.
One of these phases may not exist, if and only if $\PercThreshold{\Dimension,\RadiusMeasure}\in\EnumSet{0,\infty}$.
This is always the case in dimension one~\cite[Thm 3.1]{Meester_Roy__ContinuumPercolation__CTM_1996}.
The average volume of a ball under $\RadiusMeasure$ is a dimension-dependent multiple of
\begin{equation}
\label{eq_average_volume}
 \IntCondRad{\RadiusMeasure}
 :=
 \int_{\RRplus} r^\Dimension\RadiusMeasure(\D{}r)
 \,.
\end{equation}

\begin{Thm}[{\cite{Meester_Roy__ContinuumPercolation__CTM_1996,
Gouere__ExistenceOfSubcriticalRegimesInThePoissonBooleanModelOfContinuumPercolation__AP_2008}}]
\label{thm_percolation_boolean}
For $\Dimension \geq 2$, if $\RadiusMeasure$ satisfies $\IntCondRad{\RadiusMeasure}<\infty$, then there exists a percolation threshold
$\PercThreshold{\Dimension,\RadiusMeasure}\!\in\,]0,\infty[$.
Moreover, for each $\alpha < \PercThreshold{\Dimension,\RadiusMeasure}$ and $\Domein\in\BoundedBorelSets$,
\begin{equation}
\label{eq_boolean_subcrit_connect_vanish}
 \PoissonPP{\alpha,\RadiusMeasure}
 \left(\Domein\ConnectedIn{\xi}\Ball{0,n}^c\right)
 \xrightarrow[n \to \infty]{} 0
 \,.
\end{equation}
Furthermore, if the radii are bounded, i.e., $\RadiusMeasure([0,r_0])=1$ for some finite $r_0$, then the previous quantity decays exponentially fast in the distance.
There exist $\kappa, K$ positive such that, for all Borel sets $\Domein_1, \Domein_2$ of $\RRd$ with $\Domein_1$ contained within a $d$-dimensional unit cube,
\begin{equation}
\label{eq_boolean_subcrit_exp_decay}
 \PoissonPP{\alpha,\RadiusMeasure}
 \left(\Domein_1\ConnectedIn{\xi}\Domein_2\right)
 \leq K \exp(-\kappa\,\DistanceBetween{\Domein_1}{\Domein_2})
 \,.
\end{equation}
\end{Thm}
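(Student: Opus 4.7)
The three assertions are established separately, following the classical strategy for continuum Boolean percolation. For the non-trivial threshold $\PercThreshold{\Dimension,\RadiusMeasure}\in(0,\infty)$, the finiteness $\PercThreshold{\Dimension,\RadiusMeasure}<\infty$ follows from a standard comparison with Bernoulli site percolation on $\ZZ^\Dimension$: pick $r_0>0$ with $\RadiusMeasure([r_0,\infty))>0$, partition $\RRd$ into cubes of side length $r_0/\sqrt{\Dimension}$, and call a cube occupied if it contains a Poisson point of radius at least $r_0$. Since the maximum distance between points in two cubes sharing a face is $r_0\sqrt{(\Dimension+3)/\Dimension}\leq 2r_0$, their balls of radius $\geq r_0$ intersect, so occupied adjacent cubes are linked in the Gilbert graph. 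For $\alpha$ large enough the occupancy density exceeds the site-percolation threshold of $\ZZ^\Dimension$ and $\xi$ percolates. The positivity $\PercThreshold{\Dimension,\RadiusMeasure}>0$ is the harder direction, as unbounded radii are allowed; this is the content of Gouéré's multiscale renormalisation~\cite{Gouere__ExistenceOfSubcriticalRegimesInThePoissonBooleanModelOfContinuumPercolation__AP_2008}, which, by iterating a dyadic scale induction, shows that the probability of a cluster crossing an annulus of inner radius $2^n$ is summable in $n$ for $\alpha$ small enough, exactly under the condition $\IntCondRad{\RadiusMeasure}<\infty$.

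For~\eqref{eq_boolean_subcrit_connect_vanish}, fix $\alpha<\PercThreshold{\Dimension,\RadiusMeasure}$. By definition of sub-criticality the Gilbert graph $\GilbertGraph{\xi}$ almost surely has no unbounded component, and by local finiteness only finitely many balls of $\xi$ meet the bounded set $\Domein$. The events $\{\Domein\ConnectedIn{\xi}\Ball{0,n}^c\}$ form a decreasing sequence in $n$ whose intersection is the event that some ball of $\xi$ meeting $\Domein$ lies in an unbounded cluster; this intersection has probability zero, so continuity of measure yields the claimed limit.

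For the exponential decay~\eqref{eq_boolean_subcrit_exp_decay} with bounded radii $\RadiusMeasure([0,r_0])=1$, the interaction range is $2r_0<\infty$, placing the Boolean model in the finite-range setting. In this setting sub-critical connection probabilities decay exponentially throughout the entire sub-critical regime, a classical result obtained by block renormalisation onto a lattice $(a\ZZ)^\Dimension$ with $a\gg r_0$ combined with sharpness of the phase transition (Menshikov/Aizenman--Barsky-type arguments), see~\cite[Ch.~3]{Meester_Roy__ContinuumPercolation__CTM_1996}. To pass from centres to Borel sets one uses monotonicity in $(\Domein_1,\Domein_2)$ together with a union bound over the $O(1)$ unit-sized pieces covering $\Domein_1$; the constants $K,\kappa$ can be chosen independently of the shape of $\Domein_1$ precisely because $\Domein_1$ lies in a $\Dimension$-dimensional unit cube. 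The main obstacle in this step is covering the whole sub-critical phase rather than only small $\alpha$, which is why the sharpness results must be invoked instead of the naive small-intensity lattice domination.
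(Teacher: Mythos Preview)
The paper does not prove this theorem: it is quoted verbatim from the cited references~\cite{Meester_Roy__ContinuumPercolation__CTM_1996,Gouere__ExistenceOfSubcriticalRegimesInThePoissonBooleanModelOfContinuumPercolation__AP_2008}, with no argument given beyond the citations. Your sketch is a correct summary of how those references establish the three parts---comparison with site percolation for $\PercThreshold{\Dimension,\RadiusMeasure}<\infty$, Gou\'er\'e's multiscale renormalisation for $\PercThreshold{\Dimension,\RadiusMeasure}>0$ under $\IntCondRad{\RadiusMeasure}<\infty$, monotone convergence for~\eqref{eq_boolean_subcrit_connect_vanish}, and Menshikov/Aizenman--Barsky sharpness for~\eqref{eq_boolean_subcrit_exp_decay}---so there is nothing to compare against in the paper itself.

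One small point worth tightening in your argument for~\eqref{eq_boolean_subcrit_connect_vanish}: the claim that only finitely many balls of $\xi$ meet the bounded set $\Domein$ is not a consequence of local finiteness of the point process alone (centres far away may carry large radii), but rather of the moment hypothesis $\IntCondRad{\RadiusMeasure}<\infty$, which makes the expected number of balls intersecting any bounded region finite. With that in place, your pigeonhole-and-continuity argument goes through.
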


Concerning the case of unbounded radii, a recent paper~\cite{Ahlberg_Tassion_Teixeira__SharpnessOfThePhaseTransitionForContinuumPercolationInR2_arxiv_2016} establishes polynomial decay for the Boolean model in $\RR^2$ with unbounded radii satisfying some integrability assumption.
In a recent preprint~\cite{DuminilCopin_Raoufi_Tassion__SubcriticalPhaseOfdDimensionalPoissonBooleanPercolationAndItsVacantSet}, the authors prove exponential decay of connectivity in the $\Dimension$-dimensional Poisson Boolean model for radii having exponentially fast decaying tails.

\subsection{Gibbs point processes}
\label{sec_gibbs}

\par
For every $\SDom\in\ProjectionBoundedBorelSets$, let there be a \emph{Hamiltonian} $\Hamiltonian{\SDom}:\Conf{\SDom}\times\Conf{\Complement{\SDom}}\to]-\infty,\infty]$ jointly measurable in both arguments.
We assume that the Hamiltonians are additive in the sense that, for all disjoint $\SDom_1,\SDom_2\in\ProjectionBoundedBorelSets$ and $\Real{}^1\in\Conf{\SDom_1}$, $\Real{}^2\in\Conf{\SDom_2}$ and $\BC{}\in\Conf{\Complement{(\SDom_1\cup\SDom_2)}}$,
\begin{subequations}
\label{eq_hamiltonian_basic}
\begin{equation}
\label{eq_hamiltonian_additivity}
 \Hamiltonian{\SDom_1\cup\SDom_2}(\Real{}^1\cup\Real{}^2\mid{}\BC{})
 =
 \Hamiltonian{\SDom_1}(\Real{}^1\mid{}\BC{}\cup\Real{}^2)
 +
 \Hamiltonian{\SDom_2}(\Real{}^2\mid{}\BC{})
 \,.
\end{equation}
Furthermore, we assume $ \Hamiltonian{\SDom}(\emptyset\mid{}\BC{})=0$, which implies together with~\eqref{eq_hamiltonian_additivity} that, if
$\widetilde{\SDom} \subseteq \SDom \in \ProjectionBoundedBorelSets$,
 $\Real{}\in\Conf{\widetilde{\SDom}}$ and $\BC{} \in \Conf{\SDom ^c}$, then
\begin{equation}
\label{eq_hamiltonian_reduction_set}
 \Hamiltonian{\SDom}(\Real{}\mid{}\BC{})
 =
 \Hamiltonian{\widetilde{\SDom}}(\Real{}\mid{}\BC{})
 \,.
\end{equation}
\end{subequations}
The \emph{specification} of the Gibbs PP\footnote{The superscript ``gb'' stands for ``Gibbs''.} on $\SDom\in\ProjectionBoundedBorelSets$ with \emph{boundary condition} $\BC{}$ and \emph{activity} $\GibbsPPActivity$ is the PP law on $(\Conf{\SDom},\SigmaAlgebra{\SDom})$ given by
\begin{equation}
\label{eq_specification}
 \GibbsPP{\SDom,\BC{}}(\D{}\Real{})
 :=
 \frac%
  {e^{-\Hamiltonian{\SDom}(\Real{}\mid{}\BC{})}}
  {\PartFunPoisson{\GibbsPPActivity,\SDom,\BC{}}}
 \PoissonPP{\SDom,\GibbsPPParams}(\D{}\Real{})
 \,,
\end{equation}
where $\PartFunPoisson{\GibbsPPActivity,\SDom,\BC{}}$ is the \emph{partition function} defined by
\begin{equation}
\label{eq_partfun_poisson}
 \PartFunPoisson{\GibbsPPActivity,\SDom,\BC{}}
 :=
 \int
 e^{-\Hamiltonian{\SDom}(\Real{}\mid{}\BC{})}
 \PoissonPP{\SDom,\GibbsPPParams}(\D{}\Real{})
 \,.
\end{equation}
We omit $\Hamiltonian{.}$ and $\RadiusMeasure$ as subscripts, because we consider them given and fixed.
Within different statements additional conditions are placed on them, though.

\par
A PP $\GenericPP{}{}$ is a \emph{Gibbs state} of the specification~\eqref{eq_specification}, if it fulfils the \emph{DLR equations}.
These demand that, for every $\SDom\in\ProjectionBoundedBorelSets$ and $\GenericPP{}{}(\xi_{\Complement{\SDom}}=.)$--a.s.,
\begin{equation}
\label{eq_dlr}
 \GenericPP{}{}(\xi_{\SDom}=\D{}\Real{}
  \mid{}\xi_{\Complement{\SDom}}=\BC{})
 =
 \GibbsPP{\SDom,\BC{}}(\D{}\Real{})
 \,.
\end{equation}
Write $\GibbsStates{}$ for the Gibbs states of~\eqref{eq_specification}.
Throughout this paper we assume that $\GibbsStates{}$ is non-empty.
The question of existence of Gibbs states is classical and difficult.
Existence proofs often rely on a fine study of the interaction and the question remains open for many models.
For all models considered in Section~\ref{sec_applications}, though, existence has already been established and the corresponding references are given.

\par
We point out that we have defined $\PoissonPP{\alpha,\RadiusMeasure}$ and $\GibbsPP{\SDom,\BC{}}$ as simple PPs on $\StateSpace$ and $\SDom$ respectively, instead of as marked PPs (with the marks being the radii of the balls).
First, this lets us avoid the notational overhead associated with marked PPs.
Second, we use the Euclidean nature of $\StateSpace$, i.e., the fact that the marks lie in $\RRplus$, in Section~\ref{sec_ordering}.
See also the discussion in Section~\ref{sec_discussion}.

\subsection{Stochastic domination}
\label{sec_domination}

\par
Let $\SDom\in\BoundedBorelSets$.
On $\Conf{\SDom}^n$, the standard product $\sigma$-algebra is $\ProductAlgebra{\SDom}{n}$.
The canonical variables on $\Conf{\SDom}^n$ are $\xi:=(\xi^1,\dotsc,\xi^n)$.
A \emph{coupling} $\GenericPP{}{}$ of $n$ PP laws $\GenericPP{1}{},\dotsc,\GenericPP{n}{}$ on $\SDom$ is a probability measure on $(\Conf{\SDom}^n,\ProductAlgebra{\SDom}{n})$ such that, for all $1\le{}i\le{}n$ and $E\in\SigmaAlgebra{\SDom}$, $\GenericPP{}{}(\xi^i\in E)=\GenericPP{i}{}(\xi\in E)$.

\par
An event $E \in \SigmaAlgebra{}$ is called \emph{increasing}, if $\Real{} \in E$ implies that $\Real{} \cup{}\RealBis{}\in E$, for every $\RealBis{}\in\Conf{}$.
If $\GenericPP{1}{}$ and $\GenericPP{2}{}$ are two probability measures, then we say that $\GenericPP{2}{}$ \emph{stochastically dominates} $\GenericPP{1}{}$ (short: dominates), if
$\GenericPP{1}{} (E) \leq \GenericPP{2}{}(E)$ for all increasing events $E$.
By Strassen's theorem~\cite{Liggett__InteractingParticleSystems__FPMS_1985}, this is equivalent to the existence of a coupling $\GenericPP{}{}$ of $\GenericPP{1}{}$ and $\GenericPP{2}{}$ such that $\GenericPP{}{}(\xi^1 \subseteq \xi^2)=1$.
In the context of PPs, this is also called a \emph{thinning} from $\GenericPP{2}{}$ to $\GenericPP{1}{}$, alluding to the a.s. removal of points from the dominating PP realisation to get a realisation of the dominated PP.

\par
The \emph{Papangelou intensity} is the conditional intensity of adding a point at $X$ to $\Real{}\in\Conf{}$~\cite[Section 15.5]{Daley_VereJones__AnIntroductionToTheTheoryOfPointProcesses_II__Springer_2008}.
Abbreviating $\EnumSet{X}$ to $X$, this is the quantity
\begin{equation*}
\label{eq_papangelou}
 \lambda\exp(-\Hamiltonian{X}(X \mid{} \Real{}))
 \,.
\end{equation*}
A classic sufficient condition for stochastic domination of a Gibbs PP of activity $\GibbsPPActivity$ by a Poisson PP of intensity $\alpha$ is the uniform boundedness of the Papangelou intensity~\cite{Preston__SpatialBirthAndDeathProcesses__BIIS_1975,Georgii_Kueneth__StochasticComparisonOfPointRandomFields__JAP_1997}.
That is,
\begin{equation}
\label{eq_papangelou_bounded_uniform}
\tag{Dom}
 \lambda\exp(-\Hamiltonian{X}(X \mid{} \Real{}))
 \le
 \alpha
 \,.
\end{equation}
This is equivalent to a uniform lower bound on the local energy
$\Hamiltonian{X}(X \mid{} \Real{})$.

\section{Results and discussion}
\label{sec_results}

\subsection{The method of disagreement percolation}
\label{sec_disagreement}

The idea behind disagreement percolation is to couple two instances of a Gibbs PP on the same $\SDom\in\ProjectionBoundedBorelSets$ with arbitrary boundary conditions, such that the set of points differing between the two instances (the \emph{disagreement cluster}) is dominated by a Poisson PP.
Let $\SymDiff$ be the symmetric set difference operator.

\begin{Def}
\label{def_dacf}
A \emph{disagreement coupling\footnote{The superscript ``dac'' stands for ``disagreement coupling''.} family at level} $(\DacPoiPPParams{})$ is a family of couplings $(\Dacf{\SDom,\BC{}^1,\BC{}^2})$ indexed by $\SDom\in\ProjectionBoundedBorelSets$ and
$\BC{}^1,\BC{}^2 \in \Conf{\Complement{\SDom}}$ measurable in the boundary conditions and fulfilling
\begin{subequations}
\label{eq_dacf_properties}
\begin{align}
\label{eq_dacf_gibbs}
 \forall 1\le{}i\le{}2:\quad
 &\Dacf{\SDom,\BC{}^1,\BC{}^2}(\xi^i=\D{} \Real{})
  =
  \GibbsPP{\SDom,\BC{}^i}(\D{}\Real{})
 \,,\\
 \label{eq_dacf_poi}
 &\Dacf{\SDom,\BC{}^1,\BC{}^2}(\xi^3=\D{}\Real{})
  =
  \PoissonPP{\SDom,\DacPoiPPParams{}}(\D{}\Real{})
 \,,\\
 \label{eq_dacf_domination}
 &\Dacf{\SDom,\BC{}^1,\BC{}^2}(\xi^1\cup{}\xi^2\subseteq\xi^3)=1
\intertext{and}
 \label{eq_dacf_connected}
 &\Dacf{\SDom,\BC{}^1,\BC{}^2}
   (\forall X \in \xi^1 \SymDiff{}\xi^2:
    X\ConnectedIn{\xi^3}\BC{}^1\cup{}\BC{}^2
   ) = 1
 \,.
\end{align}
\end{subequations}
\end{Def}

The first three properties~\eqref{eq_dacf_gibbs},~\eqref{eq_dacf_poi} and~\eqref{eq_dacf_domination} describe a joint thinning from the Poisson PP to two Gibbs PPs, each with its own boundary condition.
The final property~\eqref{eq_dacf_connected} controls the influence of the boundary conditions on difference between the Gibbs PP realisations and is non-trivial.

\par
To enable~\eqref{eq_dacf_connected}, the Hamiltonian $\Hamiltonian{\SDom}(\Real{}\mid{}\BC{})$ should depend only on those points in $\BC{}$ which are connected to $\Real{}$ in $\GilbertGraph{\Real{}\cup\BC{}}$.
With~\eqref{eq_hamiltonian_additivity}, this is equivalent to the following.
If $\Real{} \in \Conf{\SDom}$ and $\BC{} \in \Conf{\SDom ^c}$ are such that $\Real{}$ and $\BC{}$ are not connected in $\GilbertGraph{\Real{} \cup \BC{}}$, then
\begin{equation}
\label{eq_hamiltonian_locality}
\tag{Loc}
 \Hamiltonian{\SDom}(\Real{}\mid{}\BC{})
 =
 \Hamiltonian{\SDom}(\Real{}\mid{}\emptyset)
 \,.
\end{equation}
As stated in the following theorem, a disagreement coupling family exists under natural conditions.

\begin{Thm}
\label{thm_dacf_existence}
If conditions~\eqref{eq_papangelou_bounded_uniform} and~\eqref{eq_hamiltonian_locality} are fulfilled,
then there exists a disagreement coupling family at level $(\ThinPoiPPParams)$.
\end{Thm}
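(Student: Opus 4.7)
The plan is to assemble the disagreement coupling from two ingredients: an explicit \emph{dependent thinning} from the dominating Poisson PP to a single Gibbs PP, which is the object developed in Section~\ref{sec_dep_thinning}, and a recursive exploration scheme that reveals the Poisson realisation one point at a time, coupling the two Gibbs copies to agree outside the growing disagreement cluster and to be independent inside it.

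First, I would invoke the construction of Section~\ref{sec_dep_thinning} to obtain, for every $\SDom\in\ProjectionBoundedBorelSets$ and every $\BC{}\in\Conf{\Complement{\SDom}}$, a coupling $\CouplingThinOne{\SDom,\BC{}}$ of $\PoissonPP{\SDom,\ThinPoiPPParams}$ and $\GibbsPP{\SDom,\BC{}}$ in which the Gibbs sample is a thinning of the Poisson sample, jointly measurable in $\BC{}$. Assumption~\eqref{eq_papangelou_bounded_uniform} is exactly what forces the one-point thinning probabilities into $[0,1]$, while~\eqref{eq_hamiltonian_locality} implies that the probability of retaining a given Poisson point $X$ depends on $\BC{}$ only through the points of $\BC{}$ lying in the $\GilbertGraph{\xi^3\cup\BC{}}$-cluster of $X$.

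To build $\Dacf{\SDom,\BC{}^1,\BC{}^2}$, I would sample once the common Poisson realisation $\xi^3\sim\PoissonPP{\SDom,\ThinPoiPPParams}$, which is almost surely finite, and then process its points sequentially, say in order of increasing distance to $\Complement{\SDom}$. At step $k$, having produced partial realisations $\eta^1_{k-1},\eta^2_{k-1}\subseteq\xi^3$, I examine the next Poisson point $X_k$ and test whether the cluster of $X_k$ in $\GilbertGraph{\xi^3\cup\BC{}^1\cup\BC{}^2}$ intersects $\BC{}^1\cup\BC{}^2$. If it does not, then by~\eqref{eq_hamiltonian_locality} the single-point thinning probability at $X_k$ is the same for the two boundary conditions, and I make the same retain/discard decision in both $\eta^1$ and $\eta^2$; if it does, the two conditional probabilities can differ and I draw the decisions independently, each with its own probability.

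The two non-trivial verifications are~\eqref{eq_dacf_gibbs} and~\eqref{eq_dacf_connected}. For the marginals, peeling points off via~\eqref{eq_hamiltonian_additivity} together with the Papangelou characterisation of $\GibbsPP{\SDom,\BC{}^i}$ shows that each single-point update is consistent with the Gibbs specification with boundary $\BC{}^i$ augmented by the previously retained points in $\SDom$, and telescoping over the finitely many updates gives the Gibbs law at the end. For~\eqref{eq_dacf_connected}, any $X_k$ at which $\eta^1_k$ and $\eta^2_k$ can differ is by construction one whose cluster in $\xi^3$ touches $\BC{}^1\cup\BC{}^2$, hence $X_k\ConnectedIn{\xi^3}\BC{}^1\cup\BC{}^2$. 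I expect the main obstacle to be not the combinatorics of the recursion but the careful handling of measurability in $(\BC{}^1,\BC{}^2)$ at every step and the treatment of unbounded boundary configurations; both points are isolated into the explicit thinning construction of Section~\ref{sec_dep_thinning}, which is why that construction is stated with measurability built in.
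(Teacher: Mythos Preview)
There is a genuine gap. Your key claim --- that under~\eqref{eq_hamiltonian_locality} the single-point retention probability from Section~\ref{sec_dep_thinning} ``depends on $\BC{}$ only through the points of $\BC{}$ lying in the $\GilbertGraph{\xi^3\cup\BC{}}$-cluster of $X$'' --- is false. Look at~\eqref{eq_thinproba_quotient}: the probability $\PointThinProba{\SDom,\BC{}}{X}{\RealBis{}}$ contains the ratio
\[
 \frac{\PartFunPoisson{\GibbsPPActivity,]X,\infty[,\BC{}\cup{}\RealBis{}\cup{}X}}
      {\PartFunPoisson{\GibbsPPActivity,[X,\infty[,\BC{}\cup{}\RealBis{}}}
 \,,
\]
and each of these partition functions integrates $e^{-\Hamiltonian{}(\tilde\omega\mid\BC{}\cup\RealBis{})}$ over \emph{all} configurations $\tilde\omega$ in the unexplored half-space $]X,\infty[$. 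Such $\tilde\omega$ may well connect to parts of $\BC{}$ lying far from the realised $\xi^3$-cluster of $X$, so the partition functions --- and hence $p^{\mathrm{s}}$ --- feel the entire boundary. Consequently, at a point $X_k$ whose $\xi^3$-cluster misses $\BC{}^1\cup\BC{}^2$, the two retention probabilities under $\BC{}^1$ and $\BC{}^2$ can still differ; you cannot ``make the same retain/discard decision'' without destroying one of the Gibbs marginals, and if you let them differ you lose~\eqref{eq_dacf_connected}. (There is also a minor order mismatch: the thinning of Section~\ref{sec_dep_thinning} uses the linearisation order $\Order$, not distance to $\Complement{\SDom}$.)

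The paper avoids this obstruction by abandoning the point-by-point coupling. It recurses on \emph{influence zones}: at each stage it takes $\InfluenceZone=\{X\in\SDom:\exists\,Y\in\BC{}^1\cup\BC{}^2,\ B(X)\cap B(Y)\ne\emptyset\}$, thins the Poisson on $\InfluenceZone$ \emph{independently} to the two Gibbs marginals (no attempt to equalise), and recurses on $\SDom\setminus\InfluenceZone$ with the retained points appended to the boundaries. The identification of the two Gibbs samples happens only when $\InfluenceZone=\emptyset$, i.e.\ when \emph{no point of the remaining domain} can be adjacent to any boundary point; then~\eqref{eq_hamiltonian_locality} makes the two full specifications on the remaining domain literally equal, so one thinning serves for both. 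This geometric separation at the level of domains --- rather than an impossible locality of $p^{\mathrm{s}}$ at the level of single points --- is the missing idea.
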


The proof of Theorem~\ref{thm_dacf_existence} is in Section~\ref{sec_proof_dacf_existence}.
The key property~\eqref{eq_dacf_connected} joined with~\eqref{eq_dacf_domination} allows control of the disagreement cluster $\xi^1 \SymDiff \xi^2$ by a percolation cluster of a Boolean model connected to the boundary.
Hence, a sub-critical Boolean model implies the uniqueness of the Gibbs state.

\begin{Thm}
\label{thm_unique_gibbs}
If there exists a disagreement coupling family at level $(\DacPoiPPParams{})$ such that
$\DacPoiPPRadiusMeasure$ satisfies the integrability assumption
$\IntCondRad{\DacPoiPPRadiusMeasure}<\infty$ and is sub-critical with
$\DacPoiPPIntensity < \PercThreshold{\Dimension,\DacPoiPPRadiusMeasure}$, then there is a unique Gibbs state in $\GibbsStates$.
\end{Thm}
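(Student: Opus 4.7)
The plan is to establish $P^1 = P^2$ for any $P^1, P^2 \in \GibbsStates$ by showing agreement on every local event $E \in \SigmaAlgebra{\Domein_0 \times \RRplus}$ with $\Domein_0 \in \BoundedBorelSets$; a monotone-class argument then extends this to all of $\SigmaAlgebra{}$. So fix $\Domein_0, E$ and let $\Domein_n := \Ball{0, n}$ with $n$ large enough that $\Domein_0 \subseteq \Domein_n$. For any coupling $\Pi_n$ of the boundary restrictions $P^i(\xi_{\Complement{\Domein_n \times \RRplus}} \in \cdot)$, combining the DLR equation \eqref{eq_dlr} applied to each $P^i$ on $\Domein_n \times \RRplus$ with the assumed disagreement coupling $\Dacf{\Domein_n \times \RRplus, \BC^1, \BC^2}$ and its Gibbs marginals \eqref{eq_dacf_gibbs} yields
\begin{equation*}
 |P^1(\xi \in E) - P^2(\xi \in E)|
 \leq
 \int
  \Dacf{\Domein_n \times \RRplus, \BC^1, \BC^2}
  \left(\xi^1_{\Domein_0 \times \RRplus} \neq \xi^2_{\Domein_0 \times \RRplus}\right)
 \,\Pi_n(\D\BC^1, \D\BC^2),
\end{equation*}
since $E$ is $\SigmaAlgebra{\Domein_0 \times \RRplus}$-measurable. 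By \eqref{eq_dacf_domination}, any point $X \in \xi^1 \SymDiff \xi^2$ lies in $\xi^3$, and by \eqref{eq_dacf_connected} it is connected through $\xi^3$ to some point of $\BC^1 \cup \BC^2$; when the spatial coordinate of $X$ lies in $\Domein_0$, this produces a $\xi^3$-connection between $\Domein_0$ and $\BC^1 \cup \BC^2$. Hence the integrand above is bounded by $\Dacf{\Domein_n \times \RRplus, \BC^1, \BC^2}(\Domein_0 \ConnectedIn{\xi^3} \BC^1 \cup \BC^2)$.

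Next I convert this connectivity probability into a single sub-critical Boolean percolation estimate. Because \eqref{eq_dacf_domination} dominates every Gibbs specification by $\PoissonPP{\DacPoiPPParams}$, each infinite-volume Gibbs state $P^i$ is also dominated by $\PoissonPP{\DacPoiPPParams}$. Enrich the coupling on $\Complement{\Domein_n \times \RRplus}$ with two conditionally independent Poisson processes $\eta_1, \eta_2 \sim \PoissonPP{\Complement{\Domein_n \times \RRplus}, \DacPoiPPParams}$ satisfying $\BC^i \subseteq \eta_i$ almost surely. Property \eqref{eq_dacf_poi} makes the marginal law of $\xi^3$ equal to $\PoissonPP{\Domein_n \times \RRplus, \DacPoiPPParams}$ regardless of $(\BC^1, \BC^2)$, so $\xi^3$ can also be taken independent of $(\BC^1, \BC^2, \eta_1, \eta_2)$. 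Then $\tilde\eta_i := \xi^3 \cup \eta_i$ has law $\PoissonPP{\DacPoiPPParams}$ on all of $\StateSpace$ and contains $\xi^3 \cup \BC^i$. Splitting $\BC^1 \cup \BC^2$ by a union bound gives
\begin{equation*}
 |P^1(\xi \in E) - P^2(\xi \in E)|
 \leq
 2\,\PoissonPP{\DacPoiPPParams}\left(\Domein_0 \ConnectedIn{\xi} \Complement{\Ball{0, n}}\right),
\end{equation*}
which tends to $0$ as $n \to \infty$ by \eqref{eq_boolean_subcrit_connect_vanish}, invoking $\DacPoiPPIntensity < \PercThreshold{\Dimension, \DacPoiPPRadiusMeasure}$ and $\IntCondRad{\DacPoiPPRadiusMeasure} < \infty$ through Theorem~\ref{thm_percolation_boolean}. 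Since the left-hand side is independent of $n$, it equals $0$, establishing $P^1 = P^2$.

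The main obstacle lies in the enriched coupling of the second step. Naively stitching $\xi^3$ on $\Domein_n \times \RRplus$ together with a single dominating Poisson process carrying $\BC^1 \cup \BC^2$ on $\Complement{\Domein_n \times \RRplus}$ would produce a Boolean model at effective intensity $2\DacPoiPPIntensity$, potentially above the percolation threshold even when $\DacPoiPPIntensity$ itself is sub-critical. The resolution is twofold: the unconditional independence of $\xi^3$ from $(\BC^1, \BC^2)$ hidden in \eqref{eq_dacf_poi} allows reusing the same $\xi^3$ across both terms, and splitting the disagreement event over $\BC^1$ and $\BC^2$ separately couples $\xi^3$ with only one dominating Poisson copy per term, so the relevant Boolean model remains at the intended sub-critical intensity $\DacPoiPPIntensity$.
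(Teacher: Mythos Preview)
Your proof is correct and follows the same overall strategy as the paper's argument in Section~\ref{sec_thm_unique_gibbs_proof}: localise via the DLR equations, invoke the disagreement coupling to bound the difference by a Poisson connection probability, then use stochastic domination of each $P^i$ by $\PoissonPP{\DacPoiPPParams}$ to control the boundary configurations. The difference lies in how the last step is executed. The paper integrates over the product law $P^1_{\Complement{\Domein_n}}\otimes P^2_{\Complement{\Domein_n}}$, replaces it by $\PoissonPP{\Complement{\Domein_n},\DacPoiPPParams}^{\otimes 2}$ via monotonicity, and then invokes an auxiliary radius-control lemma (Lemma~\ref{lem_disconnect}) to ensure that, with high probability, no ball of $\BC{}^1\cup\BC{}^2$ reaches inside $\Ball{0,n/2-k-1}$; this reduces the estimate to a connection event entirely within $\Domein_n$. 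You instead exploit that the $\xi^3$--marginal is Poisson \emph{independently} of $(\BC{}^1,\BC{}^2)$, split the target $\BC{}^1\cup\BC{}^2$ by a union bound, and glue $\xi^3$ with a Strassen-dominating outside Poisson $\eta_i\supseteq\BC{}^i$ to form a single full-space Poisson process at intensity $\DacPoiPPIntensity$; the desired decay then follows directly from~\eqref{eq_boolean_subcrit_connect_vanish}. Your route is a bit slicker in that it bypasses Lemma~\ref{lem_disconnect} entirely; the paper's route is more geometric and keeps all estimates inside the finite window $\Domein_n$ at the cost of the extra lemma. Both rely on the same domination of the infinite-volume Gibbs states by Poisson, which neither argument spells out in detail but which is a standard consequence of the finite-volume domination and the DLR equations.
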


In the case of bounded radii, the connection probabilities decay exponentially in a sub-critical Boolean model.
This translates into an exponential decay in influence of the boundary condition of the Gibbs PP and the reduced pair correlation function of the Gibbs state.

\begin{Thm}
\label{thm_correlations}
Assume that $\Dacf{}$ is a disagreement coupling family for $\GibbsPP{}$ at level
$(\DacPoiPPParams)$ such that $\DacPoiPPRadiusMeasure$ has bounded support and such that
$\DacPoiPPIntensity < \PercThreshold{\Dimension,\DacPoiPPRadiusMeasure}$.
Let $\kappa$ be the constant from~\eqref{eq_boolean_subcrit_exp_decay} for $\PoissonPP{\DacPoiPPParams}$.
Then, $\GenericPP{}{}$ is the unique Gibbs state in $\GibbsStates$ and there exists $K'>0$ such that:
\begin{subequations}
\label{eq_correlations}
\newline
For all $\Domein_1,\Domein_2\in\BoundedBorelSets$ with $\Domein_1\subseteq\Domein_2$ and $\Domein_1$ contained within a $d$-dimensional unit cube, $\BC{} \in \Conf{\Domein_2^c}$ and $E \in \SigmaAlgebra{\Domein_1}$,
\begin{equation}
\label{eq_correlations_spec}
 \Modulus{
  \GibbsPP{\Domein_2, \BC{}}(\xi_{\Domein_1}\in{}E)
  -
  \GenericPP{}{}(\xi_{\Domein_1}\in{}E)
 }
 \leq K' \exp(-\kappa\,\DistanceBetween{\Domein_1}{\Domein_2^c})
 \,.
\end{equation}
For all $\Domein_1,\Domein_2\in\BoundedBorelSets$ with $\Domein_1$ contained within a $d$-dimensional unit cube, $E\in\SigmaAlgebra{\Domein_1}$ and $F\in\SigmaAlgebra{\Domein_2}$,
\begin{equation}
\label{eq_correlation_event}
 \Modulus{
  \GenericPP{}{}(E\cap{}F)
  -
  \GenericPP{}{}(E)\GenericPP{}{}(F)
 }
 \leq K' \exp(-\kappa\,\DistanceBetween{\Domein_1}{\Domein_2})
 \,.
\end{equation}
Finally, for all $x,y\in \RR^\Dimension$, the pair correlation function $\PairCorrelationFunction (x,y)$ decays exponentially as
\begin{equation}
\label{eq_correlation_pair}
\PairCorrelationFunction (x,y)
 \leq K' \exp(-\kappa\,\DistanceBetween{x}{y})
 \,.
\end{equation}
\end{subequations}
\end{Thm}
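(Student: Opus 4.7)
The three estimates are proved in the listed order, each reducing to the previous one. The boundary-influence estimate \eqref{eq_correlations_spec} is the fundamental one and is obtained from the disagreement coupling family of Theorem~\ref{thm_dacf_existence}, the DLR equations, and the sub-critical Boolean decay \eqref{eq_boolean_subcrit_exp_decay}. The covariance bound \eqref{eq_correlation_event} is then deduced from \eqref{eq_correlations_spec} by a DLR conditioning step on a suitable intermediate domain, and the pair-correlation bound \eqref{eq_correlation_pair} is deduced from \eqref{eq_correlation_event} by specialising to shrinking neighbourhoods.

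\textbf{Proof of \eqref{eq_correlations_spec}.}
Denote by $\GenericPP{}{}$ the unique Gibbs state provided by Theorem~\ref{thm_unique_gibbs}. The DLR equations on $\Domein_2$ give
\begin{equation*}
\GenericPP{}{}(\xi_{\Domein_1} \in E)
=
\int \GibbsPP{\Domein_2, \BC{}'}(\xi_{\Domein_1} \in E)\, \GenericPP{}{}(\xi_{\Domein_2^c} \in \D{}\BC{}')\,,
\end{equation*}
so the target difference is an average of $\GibbsPP{\Domein_2, \BC{}}(\xi_{\Domein_1} \in E) - \GibbsPP{\Domein_2, \BC{}'}(\xi_{\Domein_1} \in E)$ over $\BC{}'$. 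For each pair $(\BC{}, \BC{}')$, the disagreement coupling $\Dacf{\Domein_2, \BC{}, \BC{}'}$ together with \eqref{eq_dacf_gibbs} yields the pointwise bound
\begin{equation*}
\Modulus{\GibbsPP{\Domein_2, \BC{}}(\xi_{\Domein_1} \in E) - \GibbsPP{\Domein_2, \BC{}'}(\xi_{\Domein_1} \in E)}
\le
\Dacf{\Domein_2, \BC{}, \BC{}'}(\xi^1_{\Domein_1} \ne \xi^2_{\Domein_1})\,.
\end{equation*}
By \eqref{eq_dacf_domination} and \eqref{eq_dacf_connected}, on the event $\{\xi^1_{\Domein_1} \ne \xi^2_{\Domein_1}\}$ there is a chain of pairwise intersecting balls inside $\xi^3 \cup \BC{}^1 \cup \BC{}^2$ joining $\Domein_1$ to some ball of $\BC{}^1 \cup \BC{}^2$. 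Since every ball of $\xi^3$ has centre in $\Domein_2$ and every boundary ball has centre in $\Domein_2^c$, such a chain exits $\Domein_2$ through a single boundary ball, whose radius is at most the support bound $r_0$ of $\DacPoiPPRadiusMeasure$. Completing $\xi^3$ to a full-space Poisson PP $\tilde{\xi}^3 \sim \PoissonPP{\DacPoiPPParams{}}$ by attaching an independent Poisson PP on $\Domein_2^c \times \RRplus$ only enriches connectivity, and absorbing the last boundary hop into an $r_0$-enlargement of $\Domein_2^c$ dominates the disagreement event by the pure Boolean event $\{\Domein_1 \ConnectedIn{\tilde{\xi}^3} \{z \in \RRd : \DistanceBetween{z}{\Domein_2^c} \le r_0\}\}$. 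The decay \eqref{eq_boolean_subcrit_exp_decay} (with its constant $K$ multiplied by $e^{\kappa r_0}$ to compensate for the margin) then yields \eqref{eq_correlations_spec} after integrating against $\GenericPP{}{}$.

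\textbf{Deduction of \eqref{eq_correlation_event} and \eqref{eq_correlation_pair}; main obstacle.}
For \eqref{eq_correlation_event}, assume $m := \DistanceBetween{\Domein_1}{\Domein_2} > 0$ (otherwise the bound is trivial) and introduce the bounded intermediate domain $\Domein_3 := \{z \in \RRd : \DistanceBetween{z}{\Domein_1} < m\}$, which contains $\Domein_1$ and is disjoint from $\Domein_2$, so that $F \in \SigmaAlgebra{\Domein_2} \subseteq \SigmaAlgebra{\Domein_3^c}$. Conditioning against $\xi_{\Domein_3^c}$ with the DLR equation on $\Domein_3$ gives
\begin{equation*}
\GenericPP{}{}(E \cap F) - \GenericPP{}{}(E) \GenericPP{}{}(F)
=
\Expect{\GenericPP{}{}}\Bigl[\mathbf{1}_F \bigl(\GibbsPP{\Domein_3, \xi_{\Domein_3^c}}(\xi_{\Domein_1} \in E) - \GenericPP{}{}(\xi_{\Domein_1} \in E)\bigr)\Bigr]\,,
\end{equation*}
whose integrand is $\GenericPP{}{}$-almost surely bounded by $K' e^{-\kappa m}$ via \eqref{eq_correlations_spec} applied with $(\Domein_1, \Domein_3)$ in place of $(\Domein_1, \Domein_2)$. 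For \eqref{eq_correlation_pair}, apply \eqref{eq_correlation_event} to the approximating events $E_\varepsilon := \{\Cardinality{\xi \cap (\Ball{x,\varepsilon} \times \RRplus)} \ge 1\}$ and $F_\varepsilon := \{\Cardinality{\xi \cap (\Ball{y,\varepsilon} \times \RRplus)} \ge 1\}$, normalise by $\varepsilon^{2\Dimension}$ (up to a dimensional constant), and pass to $\varepsilon \to 0$; the limit identifies the truncated pair-correlation function, which inherits the exponential decay. The principal technical hurdle is the clean conversion of the mixed disagreement event into a Boolean connectivity event for a full-space Poisson PP, because $\xi^3$ is only Poisson inside $\Domein_2$ while the certifying chain necessarily uses one boundary ball to cross into $\Domein_2^c$; it is precisely the bounded-radius hypothesis that allows replacing this hop by a fixed $r_0$-enlargement of $\Domein_2^c$, after which \eqref{eq_boolean_subcrit_exp_decay} applies with the same rate $\kappa$.
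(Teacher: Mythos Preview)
Your treatment of \eqref{eq_correlations_spec} and \eqref{eq_correlation_event} is essentially the paper's: DLR conditioning reduces to a two-boundary comparison, the disagreement coupling turns this into a Boolean connection event, and the bounded-radius assumption converts the last boundary hop into a fixed $r_0$-margin so that \eqref{eq_boolean_subcrit_exp_decay} applies with the same $\kappa$ (the paper writes this as connection to $(\Domein_2^{\ominus})^c$, you as connection to the $r_0$-enlargement of $\Domein_2^c$; same thing).

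The gap is in your argument for \eqref{eq_correlation_pair}. Applying \eqref{eq_correlation_event} directly to $E_\varepsilon$ and $F_\varepsilon$ gives
\[
\Modulus{\GenericPP{}{}(E_\varepsilon\cap F_\varepsilon)-\GenericPP{}{}(E_\varepsilon)\GenericPP{}{}(F_\varepsilon)}
\le K'\exp\bigl(-\kappa\,\DistanceBetween{\Ball{x,\varepsilon}}{\Ball{y,\varepsilon}}\bigr),
\]
a bound that is $O(1)$ as $\varepsilon\to 0$. Dividing by $\varepsilon^{2\Dimension}$ therefore blows up and yields nothing for $\PairCorrelationFunction(x,y)$. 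What is missing is a version of the disagreement bound whose right-hand side carries the volume factors $\Leb(\Ball{x,\varepsilon})\Leb(\Ball{y,\varepsilon})$.

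The paper obtains this extra ingredient by refining \eqref{eq_disagreement_classic_bound} for \emph{increasing} events: on $\{\xi^1_{\Domein_1}\in E,\ \xi^2_{\Domein_1}\notin E\}$ one has $\xi^3_{\Domein_1}\in E$ by \eqref{eq_dacf_domination}, and the connection to the boundary can be realised using only $\xi^3_{\Domein_2\setminus\Domein_1}$, so Poisson independence splits off a factor $\PoissonPP{\Domein_1,\DacPoiPPParams}(E)$ in front of the exponential. Retracing the conditioning step of \eqref{eq_correlation_event} with this refinement gives
\[
\Modulus{\GenericPP{}{}(E_\varepsilon\cap F_\varepsilon)-\GenericPP{}{}(E_\varepsilon)\GenericPP{}{}(F_\varepsilon)}
\le K'\exp(-\kappa\,\DistanceBetween{x}{y})\,
\PoissonPP{\Ball{x,\varepsilon},\DacPoiPPParams}(\Cardinality{\xi}\ge 1)\,
\PoissonPP{\Ball{y,\varepsilon},\DacPoiPPParams}(\Cardinality{\xi}\ge 1),
\]
which is $O(\varepsilon^{2\Dimension})$ and survives the normalisation. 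Without this refinement (or an equivalent device producing the volume scaling) the passage to the pair correlation does not go through.
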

The proofs of Theorem~\ref{thm_unique_gibbs} and Theorem~\ref{thm_correlations} are in Section~\ref{sec_dap_proofs}.
If one wants to consider general $\Domein_1$ in~\eqref{eq_correlations_spec} and ~\eqref{eq_correlation_event}, then a union bound yields an upper bound multiplied by the number of $d$-dimensional unit cubes needed to cover $\Domein_1$.

\subsection{Applications}
\label{sec_applications}

This section applies the theorems from Section~\ref{sec_disagreement} to several classical Gibbs models, such as the Continuum random cluster model and the Quermass-interaction model.
To the best of our knowledge, this is the first time that uniqueness at low activities is proved for these models.

\subsubsection{Gibbs models with finite range interaction}
\label{sec_finite_range}

\par
Consider Gibbs models on $\RRd$ with finite range interaction $R>0$.
Examples of such models are the hard-sphere model, the area interaction model with deterministic radii or the Strauss model~\cite{Strauss__AModelForClustering_Miometrika_1975}.
A general result of Preston~\cite{Preston__RandomFields__LNM_Springer_1976} establishes the existence of a Gibbs state, hence $\GibbsStates$ is never empty.
By taking $\RadiusMeasure = \Dirac{R}$, these models fit the setting of the present article and the condition~\eqref{eq_hamiltonian_locality} is automatically fulfilled.
If the model satisfies condition~\eqref{eq_papangelou_bounded_uniform}, as it is the case with the Strauss model, then by applying  Theorem~\ref{thm_dacf_existence}, Theorem~\ref{thm_unique_gibbs} and Theorem~\ref{thm_correlations}, we obtain the uniqueness of the Gibbs state and the exponential decay of pair correlations at low activity.
In the case of the hard-sphere model, this result was already proved in~\cite{HoferTemmel__DisagreementPercolationForTheHardSphereModel}.

\subsubsection{Continuum random cluster model}
\label{sec_model_crcm}
\par
The continuum random cluster model, also known as continuum FK-percolation model, is a Gibbs model of random balls whose interaction depends on the number of connected components of the Gilbert graph.
This model, introduced in the 1980s as a continuum analogue of the well-known (lattice) random cluster model~\cite{Grimmet__TheRandomClusterModel__FPMS_2006}, was the original motivation for doing the present article.
Recently, existence and percolation properties of this model were investigated in~\cite{Dereudre_Houdebert__InfiniteVolumeContinuumRandomClusterModel__EJP_2015,
Houdebert_PercolationResultsForTheContinuumRandomClusterModel}.
Formally, for $X\in\StateSpace$ and $\Real{}\in\Conf{}$, we have
\begin{equation*}
 \HamiltonianCRCM{X}(X \mid{} \Real{})
 :=
 - \log(q)\left(1-k(X,\Real{})\right)
 \,,
\end{equation*}
where $q>0$ is the connectivity parameter of the model and $k(X,\Real{})$ denotes the number of connected components of $\GilbertGraph{\Real{}}$ connected to $X$ in $\GilbertGraph{\Real{}\cup X}$.
This model, even in the case of bounded radii, is not finite range as the function $k$ may depend on the configuration $\Real{}$ arbitrary far from the added point $X$.
The dependence of $\HamiltonianCRCM{X}(X \mid{} \Real{})$ on $\Real{}$ via $k(X,\Real{})$ implies that the model satisfies~\eqref{eq_hamiltonian_locality}.
It also satisfies~\eqref{eq_papangelou_bounded_uniform}, because
\begin{equation*}
 \HamiltonianCRCM{X}(X \mid{} \Real{})
 \geq -\log q
 \,.
\end{equation*}
By Theorem~\ref{thm_dacf_existence}, if $\RadiusMeasure$ satisfies
$\IntCondRad{\RadiusMeasure} < \infty$, then there exists a disagreement coupling family at level $(\lambda{}q,\RadiusMeasure)$.
So by Theorem~\ref{thm_unique_gibbs}, if $\lambda < \PercThreshold{d,\RadiusMeasure}/q$, there is a unique Gibbs state.

\subsubsection{Quermass-interaction model}
\label{sec_quermass}
\par
The Quermass-interaction model is a Gibbs model of random balls in $\RR^2$ whose interaction depends on the perimeter, area and Euler characteristic of the random structure.
It was introduced in~\cite{Kendall_VanLieshout_Baddeley__QuermassInteractionProcessesConditionsForStability__AAP_1999}.
The existence of the infinite volume Gibbs model has been proven in
\cite{Dereudre__TheExistenceOfQuermassInteractionProcessesForNonlocallyStableInteractionAndNonboundedConvexGrains__AAP_2009} and the existence of a supercritical percolation phase has been shown in
\cite{Coupier_Dereudre__ContinuumPercolationForQuermassInteractionModel__EJP_2014}.

Fix $\theta_1,\theta_2,\theta_3\in\RR$.
Let $\Area(X,\Real{})$, $\Perimeter(X,\Real{})$ and $\EulerCaracteristic(X,\Real{})$ be the variation of the area, the perimeter and the Euler characteristic respectively, when the ball $\Ball{X}$ is added to $\Ball{\Real{}}$.
The local energy of the Quermass-interaction model is
\begin{equation*}
 \HamiltonianQuermass{X}(X \mid{} \Real{} )
 :=
 \theta_1\Area(X,\Real{})
 + \theta_2\Perimeter(X,\Real{})
 + \theta_3\EulerCaracteristic(X,\Real{})
 \,.
\end{equation*}
\par
The contribution of the Euler characteristic is difficult to control.
In particular when $\theta_3 \not = 0$, the domination condition~\eqref{eq_papangelou_bounded_uniform} is not satisfied, even with deterministic radii.

From here on, we only consider the case of $\theta_3 = 0$ and the radius having support on some positive finite interval, meaning that $ \RadiusMeasure ([r_0,r_1])=1 $ for some $0<r_0 \leq r_1<\infty \,$.
In this setting, the interaction is local and satisfies~\eqref{eq_hamiltonian_locality}.
Using standard bounds from~\cite[Lemma 4.12]{Coupier_Dereudre__ContinuumPercolationForQuermassInteractionModel__EJP_2014}, we have upper and lower bounds for the Papangelou intensity and~\eqref{eq_papangelou_bounded_uniform} is satisfied.
Therefore, by applying Theorem~\ref{thm_dacf_existence}, Theorem~\ref{thm_unique_gibbs} and Theorem~\ref{thm_correlations}, one gets the uniqueness of the Quermass-interaction Gibbs phase and the exponential decay of pair correlations for small enough the activity $\GibbsPPActivity$, depending on the parameters $\PercThreshold{2,Q}, \theta_1, \theta_2, r_0$ and $r_1$.

\subsubsection{Widom-Rowlinson model with random radii}
\label{sec_model_wr}
\par
The Widom-Rowlinson model is a well-known model of statistical mechanics introduced in 1970~\cite{Widom_Rowlinson__NewModelForTheStudyOfLiquidVaporPhaseTransitions__JCP_1970} originally to model the interaction between two gases.
It is also the first continuum model for which a phase transition was proved, by Ruelle~\cite{Ruelle__ExistenceOfAPhaseTransitionInAContinuousClassicalSystem} using Peierl's argument.
A modern proof of this phase transition was done by Chayes, Chayes and Koteck\'y~\cite{Chayes_Chayes_Kotecky__TheAnalysisOfTheWidomRowlinsonModelByStochasticGeometricMethods__CMP_1995} using percolation properties of the continuum random cluster model.
We consider here the generalised model with random and possibly unbounded radii.
\par
This model does not follow strictly the setting of the article, because each ball is assigned a colour mark $i$ belonging to some finite set of cardinality $q$.
The Hamiltonian is a hard-core constraint on the colouring.
Configurations with overlapping balls of different colours are forbidden.
In other words, each connected component of the Gilbert graph must be mono-coloured.
\par
To apply our result we are using a well known representation of the Widom-Rowlinson model: the \emph{Fortuin-Kasteleyn} representation.
In the infinite volume case it states~\cite[Proposition 3.1]{Houdebert_PercolationResultsForTheContinuumRandomClusterModel} that a Widom-Rowlinson measure with at most one infinite connected component is a continuum random cluster model with a uniform independent colouring of the finite connected component.
But, for small enough activities, the Widom-Rowlinson measures are not percolating, whence the Fortuin-Kasteleyn representation yields in this case a bijection between Widom-Rowlinson and continuum random cluster Gibbs phases, proving the wanted uniqueness result.

\subsection{Discussion}
\label{sec_discussion}

The domination condition~\eqref{eq_papangelou_bounded_uniform} can sometimes be very restrictive.
One way to weaken this condition is to demand the following bound on the local energy.
\begin{equation}
\label{eq_local_energy_domination_generalize}
\tag{Weak-Dom}
 \Hamiltonian{\EnumSet{(x,r)}}((x,r) \mid{} \Real{})
 \geq g(r)
 \,,
\end{equation}
where $g$ is a nice enough measurable function.
In that case the dominating Poisson PP has a radius measure with density proportional to $e^{-g(r)}$ with respect to $\GibbsPPRadiusMeasure$.
However the construction of the dependent thinning done in Section~\ref{sec_dep_thinning} does not carry over without new difficult conditions.
If one gets the existence of a coupling with measurability with respect to the boundary condition, then the construction of the disagreement coupling family done in Section~\ref{sec_proof_dacf_existence} would carry over unchanged and Theorem~\ref{thm_dacf_existence} would still be valid.

\par
As mentioned in Section~\ref{sec_model_wr},
The Widom-Rowlinson model does not fit the setting of the present article,
 and uniqueness at low activity is derived from the standard Fortuin-Kasteleyn representation and from the uniqueness at low activity of the CRCM.
However the construction of the dependent
thinning in Section~\ref{sec_dep_thinning} and the construction of the disagreement coupling in Section~\ref{sec_dac_construction} carry over to the more general case of Gibbs model of random balls, satisfying conditions~\eqref{eq_hamiltonian_locality} and~\eqref{eq_papangelou_bounded_uniform}, with an extra set
of marks (representing for instance type, temperature, energy,...) in $\RR^p$.

\par
For difficult radius measures, the analysis of the dominating Boolean model might be complicated.
If a radius measure $\RadiusMeasure'$ dominates $\DacPoiPPRadiusMeasure$, then the gluing lemma~\cite[Chapter 1]{Villani__OptimalTransport_OldAndNew__Springer_2009} couples the disagreement coupling with a dominating coupling between $\PoissonPP{\SDom,\DacPoiPPParams}$ and $\PoissonPP{\SDom,\DacPoiPPIntensity,\RadiusMeasure'}$.
This allows to use the condition $\DacPoiPPIntensity<\PercThreshold{\Dimension,\RadiusMeasure'}$ in Theorem~\ref{thm_unique_gibbs}.

\par
If the mark of a point is not a ball, but a different geometric object of a fixed shape such as a square, then the approach and results should generalise by a) comparing with a Boolean percolation model of squares or b) including the shape in a larger ball and comparing with a Boolean percolation model on such balls.
An example of where such a reasoning would apply is the segment process in~\cite{Benes_Novotna__CentralLimitTheoremForFunctionalsOfGibbsParticleProcesses}.

\par
If the geometric objects are described by more real parameters and the objects are monotone growing in the parameters, a straightforward extension of the derivation approach could work, too.
If the marks are more general, such as compact sets, and their distribution is such that they can be included in larger balls with some radius law, then again a comparison with a Boolean percolation model of this radius law yields uniqueness and exponential decay of the pair correlation.
For even more general mark spaces and measures, a possible approach could be to split the derivation in Section~\ref{sec_dep_thinning} into a purely spatial component and work with the joint mark distributions conditional on the locations.

\par
The coupling constructed in Theorem~\ref{thm_dacf_existence} reduces to the coupling family for the hard-sphere model used in~\cite{HoferTemmel__DisagreementPercolationForTheHardSphereModel}.
For other finite-range Gibbs models, it improves upon the conjectured general product construction discussed in~\cite{HoferTemmel__DisagreementPercolationForTheHardSphereModel} by a factor of two.
See the discussion at the beginning of Section~\ref{sec_proof_dacf_existence}.

\par
Other classic conditions for uniqueness of the low-activity Gibbs measure are cluster expansion and Dobrushin uniqueness.
An explicit comparison with cluster expansion has been done for the hard-sphere model in~\cite{HoferTemmel__DisagreementPercolationForTheHardSphereModel}.
It shows that disagreement percolation is better in dimensions one to three and suggests a way to show the same for high dimensions.
If one derives the exponential decay of all correlations as in Theorem~\ref{thm_correlations}, then complete analyticity would follow~\cite{Dobrushin_Shlosman__CompletelyAnalyticalInteractions_AConstructiveDescription__JStatPhys_1987}, too.
Dobrushin uniqueness~\cite{Dobrushin__DescriptionOfARandomFieldByMeansOfConditionalProbabilitiesAndConditionsForItsRegularity_TVP_1968}, generalised to finite-range interaction Gibbs PPs in~\cite[Thm 2.2]{Klein__DobrushinUniquenessTechniquesAndTheDecayOfCorrelationsInContinuumStatisticalMechanics__CMP_1982}, derives uniqueness from the summability of the variation distance between two Gibbs instances with the same boundary condition, except on a finite set of points.
In the setting of our paper, the Dobrushin condition can be checked using the disagreement coupling and the exponential bounds from Theorem~\ref{thm_correlations}.

\section{Proof of Theorem~\ref{thm_dacf_existence}}
\label{sec_proof_dacf_existence}

\par
The main result of this section is the construction of a disagreement coupling family in Section~\ref{sec_dac_construction}.
A key building block is a dependent thinning from Poisson PP and Gibbs PP $\GibbsPP{\SDom,\BC{}}$ in Section~\ref{sec_dep_thinning}.
This dependent thinning is measurable in the boundary condition and a key building block in the recursive construction of the disagreement coupling family.
The dependent thinning comes from an ordered exploration of the domain keeping or thinning points depending on the not yet explored space and the already kept points.
The single point thinning probability is explicitly given as a derivative of the free energy of the yet unexplored part of the domain.

\par
While the general approach mirrors the one taken in~\cite{HoferTemmel__DisagreementPercolationForTheHardSphereModel}, there are improvements beyond the generalisation to Gibbs PPs with more general interactions.
First, the single point thinning probability is not guessed and the proven to be the correct quantity, but derived by a principled approach by differentiating certain void probabilities.
Second, the disagreement coupling in Section~\ref{sec_dac_construction} does away with hard-sphere specific details and gets $\ThinPoiPPIntensity$ as intensity of the dominating Poisson PP.
This improves upon the conjectured general product construction discussed in~\cite{HoferTemmel__DisagreementPercolationForTheHardSphereModel} by a factor of two and seems to be optimal as discussed in that paper.
Finally, the construction of~\cite{HoferTemmel__DisagreementPercolationForTheHardSphereModel} is improved by considering balls with random radii.
In order to handle the case of unbounded radii, a control with high probability of the radii of the Poisson Boolean model in Lemma~\ref{lem_disconnect} is used to prove our uniqueness result Theorem~\ref{thm_unique_gibbs}.

\subsection{Measurable ordering}
\label{sec_ordering}

This section presents a measurable total ordering of $\SDom\in\BoundedBorelSets$ and a notion of a one-sided Lebesgue derivative in~\eqref{eq_derivative_def}.
\par
First, map a non-negative real number to its shortest binary digit expansion, filled up with zeros to a bi-infinite sequence of $0$s and $1$s.
In the case of $x$ being a multiple of $2^n$, for some $n\in{}\ZZ$, this avoids the representation of $x$ with only $1$s below index $n$.
For example, with $\bar{a}$ denoting an infinite sequence of the digit $a\in\EnumSet{0,1}$ and the decimal point ``$.$'' to the left of the power $0$ coefficient, $2$ maps to $\bar{0}10.\bar{0}$ instead of $\bar{0}1.\bar{1}$.
With
\begin{equation*}
 D:=
 \DescSet{\iota\in\EnumSet{0,1}^\ZZ}%
 {\forall{}n\in\ZZ:\exists{}m\le{}n\in\ZZ: \iota_m=0
  \,,
  \exists{}m\in\ZZ:\forall{}n\ge{}m:\iota_n=0
 }
\end{equation*}
the mapping is
\begin{equation}
\label{eq_binary_digits}
 b:
 \qquad
 \RRplus\to{}D
 \qquad
 x:=\sum_{n\in\ZZ} \iota_n 2^n \mapsto (\iota_n)_{n\in\ZZ}
 \,.
\end{equation}
All sequences in $D$ with a fixed prefix up to $k$ positions left to the decimal point, and only those, are mapped to the same interval of length $2^{-k}$ in $\RRplus$.
By the above discussion the map $b$ is bijective and measurable in both directions.

\par
Second, we use $b$ to linearise $\RRplus^m$.
Consider the map
\begin{equation}
\label{eq_binary_linearisation}
 \BinaryLinearisation:
 \qquad
 \RRplus^m\to\RRplus
 \qquad
 x\mapsto
 b^{-1}(n\mapsto b(x_{n\operatorname{mod}m})_{\lfloor{}n/m\rfloor{}})
 \,.
\end{equation}
The map $\BinaryLinearisation$ juxtaposes the digits of the same power of $2$ of the coordinates of $x$ and maps the result back to $\RRplus$.
The map $\BinaryLinearisation$ is bijective and measurable in both directions.
It equips $\RRplus^m$ with a measurable total order $\Order$ defined by
\begin{equation}
\label{eq_order}
 x\LTE{}y \Iff{} \BinaryLinearisation(x)\le{}\BinaryLinearisation(y)
 \,.
\end{equation}
For $n\in\ZZ$ and $\vec{a}\in\NN^m$, the hyperblock $\prod_{i=1}^m [\frac{a_i}{2^n},\frac{a_i+1}{2^n}[\,\subseteq\RRplus^m$ and the interval $[\BinaryLinearisation(\vec{a}),\BinaryLinearisation(\vec{a})+\frac{1}{2^{nm}}[\,\subseteq\RRplus$ are in bijection.

\par
Apply the ordering and bijection from above to $\RRplus^{\Dimension+1}$.
Without loss of generality, translation allows to apply the ordering and bijection to every $\SDom\in\ProjectionBoundedBorelSets$, by shifting its support into $\RRplus^\Dimension$.
Although $\GibbsPPRadiusMeasure$ may contain atoms, $\Leb\TensorTimes\GibbsPPRadiusMeasure$ and $\LinearizedMeasure:=(\Leb\TensorTimes\GibbsPPRadiusMeasure)\circ\BinaryLinearisation^{-1}$ are diffuse.
Because $\BinaryLinearisation$ is a measurable bijection, we may not always write it and switch between $\RRplus^{\Dimension+1}$ and its linearisation in a notational lightweight and implicit fashion.
This also holds for the measures above.

\par
For $\LinearizedMeasure$-a.e. $X\in\SDom$, there exists $\varepsilon>0$ and $\XPlusEps\in\SDom$ with $X\LTE\XPlusEps$ such that $\LinearizedMeasure([X,\XPlusEps])=\varepsilon$.
For a function $f:\SDom\to\RR$, we define a one-sided variant of the Lebesgue-derivative at $X$ by
\begin{equation}
\label{eq_derivative_def}
 \frac{\partial{}f}{\partial{}X}(X)
 :=
 \lim_{\varepsilon\to{}0} \frac{f(\XPlusEps)-f(X)}{\varepsilon}
 \,,
\end{equation}
whenever this limit exists.

\subsection{The dependent thinning}
\label{sec_dep_thinning}

Given $\SDom\in\ProjectionBoundedBorelSets$ and $\BC{}\in\Conf{\Complement{\SDom}}$, we want to thin $\PoissonPP{\SDom,\ThinPoiPPParams}$  to $\GibbsPP{\SDom,\BC{}}$.
We order $\SDom$ by $\Order$ and restrict intervals $]X,Y]$ (and all variants thereof) to $\SDom$.
\begin{Prop}
\label{prop_thinning}
Under assumption~\eqref{eq_papangelou_bounded_uniform}, a thinning\footnote{The superscript ``thin'' stands for ``thinning''.} from $\PoissonPP{\SDom,\ThinPoiPPParams}$ to $\GibbsPP{\SDom,\BC{}}$ is given by
\begin{equation}
\label{eq_thinning}
 \CouplingThinOne{\SDom,\BC{},\ThinPoiPPParams}
 (\D{}(\Real{}^1,\Real{}^2))
 :=
 \JointThinProba{\SDom,\BC{}}{\Real{}^1}{\Real{}^2}
 \PoissonPP{\SDom,\ThinPoiPPParams}(\D{}\Real{}^2)
 \,,
\end{equation}
with the indicator function being $\Indicator{.}$ and the joint thinning
probability
\footnote{The superscript ``j'' stands for ``joint''.}
$\JointThinProba{\SDom,\BC{}}{\RealBis{} }{ \Real{} }$ being
\begin{multline}
\label{eq_thinprobajoint_def}
 \JointThinProba{\SDom,\BC{}}{\RealBis{} }{ \Real{} }
 :=
 \Indicator{\RealBis{} \subseteq{} \Real{} }
 \left(
  \prod_{Y \in{} \RealBis{}}
  \PointThinProba{\SDom,\BC{}}{Y}{ \RealBis{]-\infty,Y[}}
 \right)
 \\\times
 \left(
  \prod_{Z \in{} \Real{} \setminus{} \RealBis{} }
  \left(1-\PointThinProba{\SDom,\BC{}}{Z}{\RealBis{]-\infty,Z[}}\right)
 \right)
 \,,
\end{multline}
and the dependent single point thinning probability
\footnote{The superscript ``s'' stands for ``single point''.}
$\PointThinProba{\SDom,\BC{}}{X}{\RealBis{}}$ being
\begin{align}
\label{eq_thinproba_logarithm}
 \PointThinProba{\SDom,\BC{}}{X}{\RealBis{}}
 &:=
 -\frac{1}{\ThinPoiPPIntensity}
 \frac{\partial}{\partial{}X}
 \Bigl(
  \GibbsPPActivity\LinearizedMeasure ( [X,\infty[ )
  +
  \log\PartFunPoisson{\GibbsPPActivity,[X,\infty[,\BC{}\cup{}\RealBis{}}
 \Bigr)
 \\ & =
\label{eq_thinproba_quotient}
 \frac{\lambda}{\ThinPoiPPIntensity}
 e^{-\Hamiltonian{X}(X\mid{}\BC{}\cup{}\RealBis{})}
 \frac{%
  \PartFunPoisson{\GibbsPPActivity,]X,\infty[,\BC{}\cup{}\RealBis{}\cup{}X}
 }{%
  \PartFunPoisson{\GibbsPPActivity,[X,\infty[,\BC{}\cup{}\RealBis{}}
 }
 \,.
\end{align}
The derivative in~\eqref{eq_thinproba_logarithm} is as in~\eqref{eq_derivative_def} and is proven in Section~\ref{sec_derivative}.
The thinning probabilities and the thinning itself are measurable in the boundary condition.
\end{Prop}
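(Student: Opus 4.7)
The plan is to verify three things: the equivalence of the two formulations of $\PointThinProba{\SDom,\BC{}}{X}{\RealBis{}}$, that this quantity lies in $[0,1]$, and that the coupling $\CouplingThinOne{\SDom,\BC{},\ThinPoiPPParams}$ has the right marginals. The inclusion $\xi^1\subseteq\xi^2$ and the Poisson marginal of $\xi^2$ are built directly into~\eqref{eq_thinning}--\eqref{eq_thinprobajoint_def} via the indicator, so the substantial work concerns the Gibbs marginal of $\xi^1$.

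For the identification of~\eqref{eq_thinproba_logarithm} with~\eqref{eq_thinproba_quotient}, I would split $\PoissonPP{[X,\infty[,\GibbsPPParams}$ into its independent restrictions to $[X,\XPlusEps]$ and $]\XPlusEps,\infty[$ and expand to first order in the Poisson mass $\lambda\varepsilon$ of the small interval. Conditioning on zero or one Poisson points in $[X,\XPlusEps]$ and using the additivity~\eqref{eq_hamiltonian_additivity} twice (to swap the order in which $X$ and the rest are added to the boundary) extracts the factor $e^{-\Hamiltonian{X}(X\mid\BC{}\cup\RealBis{})}\PartFunPoisson{\lambda,]X,\infty[,\BC{}\cup\RealBis{}\cup X}$ from the one-point contribution. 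Taking logarithms, letting $\varepsilon\to0$, combining with $\partial(\lambda\LinearizedMeasure([X,\infty[))/\partial X=-\lambda$, and using diffuseness of $\LinearizedMeasure$ to identify the $[X,\infty[$ and $]X,\infty[$ versions of the partition function yields~\eqref{eq_thinproba_quotient}. For the $[0,1]$ bound, rewriting the partition-function ratio in~\eqref{eq_thinproba_quotient} via~\eqref{eq_hamiltonian_additivity} as an expectation against $\GibbsPP{]X,\infty[,\BC{}\cup\RealBis{}}$ gives $\PointThinProba{\SDom,\BC{}}{X}{\RealBis{}} = \Expect{\GibbsPP{]X,\infty[,\BC{}\cup\RealBis{}}}[(\lambda/\alpha)e^{-\Hamiltonian{X}(X\mid\BC{}\cup\RealBis{}\cup\xi)}]$, which lies in $[0,1]$ by~\eqref{eq_papangelou_bounded_uniform}.

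For the Gibbs marginal, I would test against an arbitrary non-negative measurable functional $G$. Iterated Mecke/Slivnyak applied to $\PoissonPP{\SDom,\ThinPoiPPParams}$ converts the sum $\sum_{\Real{}^1\subseteq\xi^2}G(\Real{}^1)\JointThinProba{\SDom,\BC{}}{\Real{}^1}{\xi^2}$ under expectation into an integral over ordered tuples $(y_1\LTE\cdots\LTE y_n)$ paired with an independent copy of $\PoissonPP{\SDom,\ThinPoiPPParams}$ accounting for the rejected points. The product over rejected points becomes a Laplace functional that reduces to $\exp(-\alpha\int_{\SDom}\PointThinProba{\SDom,\BC{}}{Z}{\{y_1,\ldots,y_n\}\cap{]-\infty,Z[}}\LinearizedMeasure(\D Z))$. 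The decisive observation is that the derivative form~\eqref{eq_thinproba_logarithm} turns this integral into a telescoping sum: on each interval $]y_i,y_{i+1}[$ the accumulated kept set below $Z$ is constant and equal to $\{y_1,\ldots,y_i\}$, so the fundamental theorem of calculus collapses the integral into boundary values of $\lambda\LinearizedMeasure([Z,\infty[)+\log\PartFunPoisson{\lambda,[Z,\infty[,\BC{}\cup\{y_1,\ldots,y_i\}}$. Multiplying the resulting exponential by $\prod_i\alpha\,\PointThinProba{\SDom,\BC{}}{y_i}{\{y_1,\ldots,y_{i-1}\}}$ in the closed form~\eqref{eq_thinproba_quotient}, the partition-function quotients cancel in telescoping fashion and~\eqref{eq_hamiltonian_additivity} assembles $\sum_i\Hamiltonian{y_i}(y_i\mid\BC{}\cup\{y_1,\ldots,y_{i-1}\})$ into $\Hamiltonian{\SDom}(\{y_1,\ldots,y_n\}\mid\BC{})$; the remaining factor is precisely the Janossy density of $\GibbsPP{\SDom,\BC{}}$ against $\PoissonPP{\SDom,\GibbsPPParams}$.

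The main obstacle is orchestrating this last telescoping: the single-point thinning probability depends simultaneously on the still-unexplored domain endpoint and on the kept points so far, and matching the Laplace-functional integral against the product at the kept locations requires both dependencies to line up. The derivative form~\eqref{eq_thinproba_logarithm} is designed exactly for this purpose---it converts an a priori intractable integral of single-point probabilities into differences of an explicit primitive, which exactly cancel the partition-function ratios arising from the quotient form~\eqref{eq_thinproba_quotient}. Measurability of the thinning in $\BC{}$ is then routine, following from joint measurability of $\Hamiltonian{\SDom}(\cdot\mid\cdot)$, Fubini for the partition functions, and the measurability of the order $\Order$ constructed in Section~\ref{sec_ordering}.
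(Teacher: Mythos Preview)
Your proposal is correct and follows essentially the same route as the paper: the derivative identity is obtained by the same first-order Poisson expansion on $[X,\XPlusEps]$, and the Gibbs marginal is verified by the same telescoping of partition-function ratios across the ordered intervals between kept points, your Mecke/Slivnyak bookkeeping being just a repackaging of the paper's direct factorisation of $\PoissonPP{\SDom,\ThinPoiPPParams}$ over those intervals. The only notable addition is your explicit verification that $\PointThinProba{\SDom,\BC{}}{X}{\RealBis{}}\in[0,1]$ via the expectation rewriting, which the paper leaves implicit.
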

\begin{proof}
Consider the points of a realisation $\Real{}$ of $\PoissonPP{\SDom,\ThinPoiPPParams}$ sequentially.
The decision of whether to keep or thin a point $X$ depends only on decisions already taken in $]-\infty,X[$.

In particular, the only information we admit is the location of the already kept points $\RealBis{}\subseteq{}\Real{}\cap]-\infty,X[$.
We name the thinning probability $\PointThinProba{\SDom,\BC{}}{X}{\RealBis{}}$.

\par
We consider what happens if, starting at some $X\in\SDom$, we delete all points in
$\Real{[X,\infty]}$, i.e., all not yet considered points in $\Real{}$.
On the one side, this is the \emph{void probability}, i.e.,  the probability of the empty configuration, of a thinned Poisson PP with intensity $\ThinPoiPPIntensity \PointThinProba{\SDom,\BC{}}{.}{\RealBis{}}$.

\begin{subequations}
\label{eq_cond_void}
\begin{equation}
\label{eq_cond_void_poi_thinned}
 \PoissonPP{[X,\infty[,\ThinPoiPPIntensity \PointThinProba{\SDom,\BC{}}{.}{\RealBis{}},\ThinPoiPPRadiusMeasure}
 (\xi=\emptyset)
 =
 \exp\left(
  - \ThinPoiPPIntensity
  \int_{[X,\infty[}
  \PointThinProba{\SDom,\BC{}}{Y}{\RealBis{}}
  \LinearizedMeasure(\D{}Y)
 \right)
 \,.
\end{equation}
On the other side, the resulting empty realisation follows the
$\GibbsPP{\SDom,\BC{}}$ law.
The DLR equations~\eqref{eq_dlr} imply that
\begin{multline}
\label{eq_cond_void_crcm}
 \GibbsPP{\SDom,\BC{}}(\xi_{[X,\infty[} = \emptyset
  \mid{}\xi_{]-\infty,X[} = \RealBis{})
 \\=
 \GibbsPP{[X,\infty[,\BC{}\cup{}\RealBis{}}(\xi=\emptyset)
 =
 \frac{e^{-\GibbsPPActivity \LinearizedMeasure ( [X,\infty[ ) }}{\PartFunPoisson{\GibbsPPActivity,[X,\infty[,\BC{}\cup{}\RealBis{}}}
 \,.
\end{multline}
\end{subequations}
Equating the left hand sides of~\eqref{eq_cond_void_poi_thinned} and~\eqref{eq_cond_void_crcm} leads to
\begin{equation}
\label{eq_cond_void_equality}
 \ThinPoiPPIntensity
 \int_{[X,\infty[}
  \PointThinProba{\SDom,\BC{}}{Y}{\RealBis{}}
  \LinearizedMeasure(\D{Y})
 =
 \GibbsPPActivity \LinearizedMeasure ( [X,\infty[ )
 +
 \log\PartFunPoisson{\GibbsPPActivity,[X,\infty[,\BC{}\cup{}\RealBis{}}
 \,.
\end{equation}
Taking the derivative along the ordered space $(\SDom,\Order)$ yields~\eqref{eq_thinproba_logarithm}.
On the left-hand side of~\eqref{eq_cond_void_equality} we apply a one-sided version of the \emph{Lebesgue differentiation theorem}~\cite[Thm 5.6.2]{Bogachev__MeasureTheory__Springer_2007} to extract the integrand as the $\LinearizedMeasure$-a.e. derivative.
There is an additional minus sign in~\eqref{eq_thinproba_logarithm}, because differentiation of the left-hand side of~\eqref{eq_cond_void_equality} proceeds in decreasing direction in $\Order$, reverse to the direction used the common direction used in the derivative~\eqref{eq_derivative_def}.
Because $\LinearizedMeasure$ is diffuse, the negligible change of the left interval border from closed to open in the left-hand side of~\eqref{eq_cond_void_crcm} does not matter.

For the moment let us assume the equality \eqref{eq_thinproba_quotient} is true, which is properly proved in Section \ref{sec_derivative}.
It remains to show that the first marginal of $\CouplingThinOne{\SDom,\BC{},\ThinPoiPPParams}$ equals the Gibbs specification $\GibbsPP{\SDom,\BC{}}$.
For $\Real{}\in\Conf{\SDom}$,
\begin{equation*}
 \CouplingThinOne{\SDom,\BC{},\ThinPoiPPParams}(\xi^1=\D{}\Real{})
 =
 \int_{\Conf{\SDom}}
  \JointThinProba{\SDom,\BC{}}{\Real{}}{\RealBis{}}
  \PoissonPP{\SDom,\ThinPoiPPParams}(\D{}\RealBis{})
 \,.
\end{equation*}
Order the points in $\Real{}$ increasingly in $\LTE{}$ and denote them by $Y_1$ to $Y_n$.
Let $\Real{i}:=\EnumSet{Y_1,\dotsc,Y_i}$ with $\Real{0}:=\emptyset$ and $\Real{n+1}:=\Real{}$.
Set $Y_0:=-\infty$ and $Y_{n+1}:=+\infty$.
Expand using~\eqref{eq_thinprobajoint_def} and factorise to get
\begin{multline}
\label{eq_marginal_one}
 \CouplingThinOne{\SDom,\BC{},\ThinPoiPPParams}(\xi^1=\D{}\Real{})
 =
 \left(
 \prod_{i=1}^{n}
 \PointThinProba{\SDom,\BC{}}{Y_i}{\Real{i-1}}
 \right)
 e^{\ThinPoiPPIntensity\LinearizedMeasure(\SDom)}
 \PoissonPP{\SDom,\ThinPoiPPParams}(\D{}\Real{})
 \\
 \times\prod_{i=1}^{n+1}
 \int_{\Conf{]Y_{i-1},Y_i[}}
  \prod_{Z\in{}\RealBis{}}
  \left(1-\PointThinProba{\SDom,\BC{}}{Z}{\Real{i-1}}\right)
  \PoissonPP{]Y_{i-1},Y_i[,\ThinPoiPPParams}(\D{}\RealBis{})
 \,.
\end{multline}
For each factor in the second product, the thinning kernel used within the integral does not change and, using standard Poisson computations and~\eqref{eq_cond_void}, we obtain that
\begin{align*}
 \int_{]Y_{i-1},Y_i[}
 \prod_{Z \in{} \RealBis{} }
 & % COSMETIC
 \left( 1-\PointThinProba{\SDom,\BC{}}{Z}{\Real{i-1}}\right)
 \PoissonPP{]Y_{i-1},Y_i[,\ThinPoiPPParams}(\D{}\RealBis{})
 \\
 &={}
 \exp \left(- \ThinPoiPPIntensity \int_{]Y_{i-1},Y_i[}
 \PointThinProba{\SDom,\BC{}}{Z}{\Real{i-1}}
 \LinearizedMeasure(\D{} Z)
 \right)
 \\
 &={}
 e^{-\GibbsPPActivity \LinearizedMeasure ( ]Y_{i-1},Y_i[ ) }
 \frac{
  \PartFunPoisson{\GibbsPPActivity,]Y_{i},\infty[,\BC{}\cup{}\Real{i-1}}
 }{
  \PartFunPoisson{\GibbsPPActivity,]Y_{i-1},\infty[,\BC{}\cup{}\Real{i-1}}
 }
 \,.
\end{align*}
Furthermore, we can simplify
\begin{align*}
 \prod_{i=1}^{n}
 \frac{%
 \PartFunPoisson{
  \GibbsPPActivity,]Y_i,\infty[,\BC{}\cup{}\Real{i}}
 }{
  \PartFunPoisson{\GibbsPPActivity,]Y_i,\infty[,\BC{}\cup{}\Real{i-1}}
 }
 \prod_{i=1}^{n+1}
 \frac{
  \PartFunPoisson{\GibbsPPActivity,]Y_{i},\infty[,\BC{}\cup{}\Real{i-1}}
 }{
  \PartFunPoisson{\GibbsPPActivity,]Y_{i-1},\infty[,\BC{}\cup{}\Real{i-1}}
 }
 =
 \frac{1}{\PartFunPoisson{\GibbsPPActivity,\SDom,\BC{}}}
 \,.
\end{align*}
Plugging~\eqref{eq_thinproba_quotient} into~\eqref{eq_marginal_one} and adjusting the interval borders to open intervals, then simplifying factors as in the preceding calculations leaves us with
\begin{multline*}
 \CouplingThinOne{\SDom,\BC{},\ThinPoiPPParams}(\xi^1=\D{}\Real{})
 ={}
 e^{(\ThinPoiPPIntensity -\GibbsPPActivity)\LinearizedMeasure(\SDom)}
 \left(\frac{\GibbsPPActivity}{\ThinPoiPPIntensity}\right)^{\Cardinality{\Real{}}}
 \frac{
  e^{-\Hamiltonian{\SDom}(\Real{}|\BC{})}
 }{
  \PartFunPoisson{\GibbsPPActivity,\SDom,\BC{}}
 }
 \PoissonPP{\SDom,\ThinPoiPPParams}(\D{}\Real{})
 \\
 ={}
 \frac{
  e^{-\Hamiltonian{\SDom}(\Real{}|\BC{})}
 }{
  \PartFunPoisson{\GibbsPPActivity,\SDom,\BC{}}
 }
 \PoissonPP{\SDom,\GibbsPPActivity,\GibbsPPRadiusMeasure}(\D{}\Real{})
 =
 \GibbsPP{\SDom,\BC{}}(\D{}\Real{})
 \,.
\end{multline*}
\end{proof}

\subsection{Derivative}
\label{sec_derivative}

This section shows that the thinning probability~\eqref{eq_thinproba_logarithm} is a quotient of partition functions~\eqref{eq_thinproba_quotient}.
Abbreviating $\BC{}':=\BC{}\cup{}\RealBis{}$ and multiplying by $-\alpha$, this reduces to showing that
\begin{multline}
\label{eq_derivative_quotient_equality}
 \frac{\partial}{\partial{}X}
 \Bigl(
  \GibbsPPActivity \LinearizedMeasure( [X,\infty[ )
  +
  \log\PartFunPoisson{\GibbsPPActivity,[X,\infty[,\BC{}'}
 \Bigr)
 \\=
 -\GibbsPPActivity
 e^{-\Hamiltonian{X}(X\mid{}\BC{}')}
 \frac{%
  \PartFunPoisson{\GibbsPPActivity,]X,\infty[,\BC{}'\cup{}X}
 }{%
  \PartFunPoisson{\GibbsPPActivity,[X,\infty[,\BC{}'}
 }
 \,.
\end{multline}
Writing
$z(X):=\PartFunPoisson{\GibbsPPActivity,[X,\infty[,\BC{}'}$, we have the classical relation
\begin{equation*}
 \frac{\partial}{\partial{}X}\log{}z(X)
 = \frac{z'(X)}{z(X)}
 \,.
\end{equation*}
Since
$\frac{\partial}{\partial{}X}\Bigl( \GibbsPPActivity \LinearizedMeasure ( [X,\infty[ ) \Bigr)= - \GibbsPPActivity$,  to show~\eqref{eq_derivative_quotient_equality} it remains to verify that
\begin{equation}
\label{eq_derivative_partfun}
 z'(X)
 =
\GibbsPPActivity z(X)
-\GibbsPPActivity
e^{-\Hamiltonian{X}(X\mid{}\BC{}')}
\PartFunPoisson{\GibbsPPActivity,]X,\infty[,\BC{}'\cup{}X}
 \,.
\end{equation}
We use the notation from~\eqref{eq_derivative_def}.
To lighten the notation and since the Hamiltonian satisfies additivity and is independent of the domain~\eqref{eq_hamiltonian_basic}, we omit its domain subscript for the remainder of this subsection.
Recall that $\varepsilon=\LinearizedMeasure([X,\XPlusEps[)$.
Then,
\begin{align*}
 &z'(X)
 \\\stackrel{}{=}{}
 &\lim_{\varepsilon\to{}0}
  \frac{1}{\varepsilon}
  \left(z(\XPlusEps)-z(X)\right)
 \\\stackrel{}{=}{}
 &\lim_{\varepsilon\to{}0}\frac{1}{\varepsilon}
  \left(
   \int
   e^{-\Hamiltonian{}(\Real{} \mid{} \BC{}')}
   \PoissonPP{[\XPlusEps,\infty[,\GibbsPPParams}(\D{}\Real{})
   -
   \int
   e^{-\Hamiltonian{}(\Real{} \mid{} \BC{}')}
   \PoissonPP{[X,\infty[,\GibbsPPParams}(\D{}\Real{})
  \right)
 \\\stackrel{}{=}{}
 &\lim_{\varepsilon\to{}0}\frac{1}{\varepsilon}
  \int
  \underbrace{
  \int
  \left(
   e^{-\Hamiltonian{}(\Real{} \mid{} \BC{}')}
   -
   e^{-\Hamiltonian{}(\Real{}\cup{}\BC{}'' \mid{} \BC{}')}
  \right)
  \PoissonPP{[\XPlusEps,\infty[,\GibbsPPParams}(\D{}\Real{})
  }_{=:E(\BC{}'')}
  \PoissonPP{[X,\XPlusEps[,\GibbsPPParams}(\D{}\BC{}'')
 \,.
\end{align*}
We split the outermost integral according to the cardinality of $\BC{}''$.
Summing up those cases yields~\eqref{eq_derivative_partfun}, with only the case $\Cardinality{\BC{}''}=1$ having a non-trivial contribution.

\par
$\bullet$
Case $\Cardinality{\BC{}''}=0$:
This is equivalent to $\BC{}''=\emptyset$.
This gives $E(\emptyset)=0$ and
\begin{align*}
 \lim_{\varepsilon\to{}0}\frac{1}{\varepsilon}
 \int
 \Indicator{\BC{}''=\emptyset}
 E(\emptyset)
 \PoissonPP{[X,\XPlusEps[,\GibbsPPParams}(\D{}\BC{}'')
 =
 0
 \,.
\end{align*}

\par
$\bullet$
Case $\Cardinality{\BC{}''}\ge 2$:
For each $\widetilde{\SDom}\in\ProjectionBoundedBorelSets$, the Poisson measures at intensities $\ThinPoiPPIntensity$ and $\GibbsPPActivity$ relate as
\begin{equation}
\label{eq_poi_intensity_change}
 e^{(\ThinPoiPPIntensity-\GibbsPPActivity)\LinearizedMeasure(\widetilde{\SDom})}
 \PoissonPP{\widetilde{\SDom},\ThinPoiPPIntensity,\GibbsPPRadiusMeasure}
 (\D{}\Real{})
 =
 \left( \frac{\ThinPoiPPIntensity}{\GibbsPPActivity} \right)^
 {\Cardinality{\Real{}}}
 \PoissonPP{\widetilde{\SDom},\GibbsPPParams}(\D{}\Real{})
 \,.
\end{equation}
The uniform upper bound on the Papangelou intensity~\eqref{eq_papangelou_bounded_uniform}
and~\eqref{eq_poi_intensity_change} applied to $[\XPlusEps,\infty[$ yields the upper bound
\begin{align*}
 \Modulus{E(\BC{}'')}
 \le
 \int
  \left(\frac{\ThinPoiPPIntensity}{\GibbsPPActivity} \right)^
  {\Cardinality{\Real{}}}
 \left(1 + \left(\frac{\ThinPoiPPIntensity}{\GibbsPPActivity} \right)^
 {\Cardinality{\BC{}''}}\right)
 & \PoissonPP{[\XPlusEps,\infty[,\GibbsPPParams}(\D{}\Real{})
 \\ &=
 \left(1 + \left(\frac{\ThinPoiPPIntensity}{\GibbsPPActivity}\right)^
 {\Cardinality{\BC{}''}}\right)
 e^{(\ThinPoiPPIntensity-\GibbsPPActivity)\LinearizedMeasure([\XPlusEps,\infty[) }
 \,.
\end{align*}
Plugging this inequality back into the outer integral and another application of~\eqref{eq_poi_intensity_change} for $[X,\XPlusEps[$ yields
\begin{align*}
& \LRModulus{
  \int
   \Indicator{\BC{}''\geq 2}
   E(\BC{}'')
   \PoissonPP{[X,\XPlusEps[,\GibbsPPParams}(\D{}\BC{}'')
 }
 \\ & \le{}
 \int \Indicator{\BC{}''\geq 2}
  \left(1 + \left(\frac{\ThinPoiPPIntensity}{\GibbsPPActivity}\right)^
  {\Cardinality{\BC{}''}}\right)
  e^{(\ThinPoiPPIntensity-\GibbsPPActivity)\LinearizedMeasure([\XPlusEps,\infty[) }
  \PoissonPP{[X,\XPlusEps[,\GibbsPPParams}(\D{}\BC{}'')
 \\ & =
 e^{(\ThinPoiPPIntensity-\GibbsPPActivity)\LinearizedMeasure([\XPlusEps,\infty[) }
 \left(
 \PoissonPP{[X,\XPlusEps[,\GibbsPPParams}(\Cardinality{\xi}\ge 2)
 +
 e^{(\ThinPoiPPIntensity-\GibbsPPActivity)\varepsilon}
 \PoissonPP{[X,\XPlusEps[,\ThinPoiPPIntensity,\GibbsPPRadiusMeasure}(\Cardinality{\xi}\ge 2)
 \right)
 \,.
\end{align*}
As both $\PoissonPP{[X,\XPlusEps[,\ThinPoiPPIntensity,\GibbsPPRadiusMeasure}(\Cardinality{\xi}\ge 2)$ and $\PoissonPP{[X,\XPlusEps[,\GibbsPPParams}(\Cardinality{\xi}\ge 2)$ are $o(\varepsilon^2)$, we see that
\begin{equation*}
 \lim_{\varepsilon\to{}0}\frac{1}{\varepsilon}
 \LRModulus{
  \int
  \Indicator{\Cardinality{\BC{}''}\ge{}2}
  E(\BC{}'')
  \PoissonPP{[X,\XPlusEps[,\GibbsPPParams}(\D{}\BC{}'')
 }
 = 0
 \,.
\end{equation*}

\par
$\bullet$
Case $\Cardinality{\BC{}''} = 1$:
Let $Y$ be the single point in the configuration $\BC{}''$.
Then
\begin{align*}
 &
 \frac{1}{\varepsilon}
 \int
 \Indicator{\BC{}'' = 1}
 E(\BC{}'')
 \PoissonPP{[X,\XPlusEps[,\GibbsPPParams}(\D{}\BC{}'')
 \\\stackrel{}{=}{}
 &
 \frac{\GibbsPPActivity e^{-\GibbsPPActivity \varepsilon }}{\varepsilon}
 \int_{[X,\XPlusEps[}
 \int
  \left(
   e^{-\Hamiltonian{}(\Real{} \mid{} \BC{}')}
   -
   e^{-\Hamiltonian{}(\Real{}\cup{} Y \mid{} \BC{}')}
  \right)
 \PoissonPP{[\XPlusEps,\infty[,\GibbsPPParams}(\D{}\Real{})
 \LinearizedMeasure(\D{}Y)
 \\\stackrel{}{=}{}
 &
 \frac{\GibbsPPActivity e^{-\GibbsPPActivity \varepsilon }}{\varepsilon}
  \int_{[X,\XPlusEps[}
   \left(
   z(\XPlusEps)
   -
   \int
    e^{-\Hamiltonian{}(\Real{}\cup{} Y \mid{} \BC{}')}
    \PoissonPP{[\XPlusEps,\infty[,\GibbsPPParams}(\D{}\Real{})
      \right)
   \LinearizedMeasure(\D{}Y)
 \,.
\end{align*}
First, we analyse the integral over the left integrand.
Using the continuity of $z$ we obtain
\begin{align*}
\frac{\GibbsPPActivity e^{-\GibbsPPActivity \varepsilon }}{\varepsilon}
\int_{[X,\XPlusEps[}
z(\XPlusEps)
\LinearizedMeasure(\D{}Y)
=
\GibbsPPActivity e^{-\GibbsPPActivity \varepsilon }
z(\XPlusEps)
\underset{\varepsilon \to 0}{\longrightarrow}
\GibbsPPActivity z(X).
\end{align*}

Second, we analyse the integral over the right integrand.
Using~\eqref{eq_hamiltonian_basic} to expand
$
 \Hamiltonian{}(\Real{}\cup{} Y \mid{} \BC{}')
 =
 \Hamiltonian{}(\Real{} \mid{} \BC{}')
 +
 \Hamiltonian{}( Y \mid{} \Real{} \cup{} \BC{}')
$,
we get
\begin{align*}
 &
 \frac{\GibbsPPActivity e^{-\GibbsPPActivity \varepsilon }}{\varepsilon}
 \int_{[X,\XPlusEps[}
  \int
   e^{-\Hamiltonian{}(\Real{}\cup{} Y \mid{} \BC{}')}
   \PoissonPP{[\XPlusEps,\infty[,\GibbsPPParams}(\D{}\Real{})
  \LinearizedMeasure(\D{}Y)
 \\\stackrel{}{=}{}
 &
 \frac{\GibbsPPActivity e^{-\GibbsPPActivity \varepsilon }}{\varepsilon}
 \int
  e^{ - \Hamiltonian{}(\Real{} \mid{} \BC{}')}
  \int_{[X,\XPlusEps[}
   e^{- \Hamiltonian{}( Y \mid{} \Real{} \cup{} \BC{}') }
   \LinearizedMeasure(\D{}Y)
  \PoissonPP{[\XPlusEps,\infty[,\GibbsPPParams}(\D{}\Real{})
 \\\stackrel{}{=}{}
 &
 \frac{\GibbsPPActivity }{\varepsilon}
 \int
  \Indicator{\Real{[X,\XPlusEps[}=\emptyset}
  e^{ - \Hamiltonian{}(\Real{} \mid{} \BC{}')}
  \int_{[X,\XPlusEps[}
   e^{- \Hamiltonian{}( Y \mid{} \Real{} \cup{} \BC{}') }
   \LinearizedMeasure(\D{}Y)
  \PoissonPP{]X,\infty[,\GibbsPPParams}(\D{}\Real{})
 \,.
\end{align*}
The functions
\begin{equation*}
 \varepsilon \mapsto
 \Indicator{\Real{[X,\XPlusEps[}=\emptyset}
 e^{ - \Hamiltonian{}(\Real{} \mid{} \BC{}')}
\end{equation*}
and
\begin{equation*}
 \varepsilon \mapsto
 \frac{1}{\varepsilon}
 \int_{[X,\XPlusEps[}
  e^{- \Hamiltonian{}( Y \mid{} \Real{} \cup{} \BC{}') }
    \LinearizedMeasure(\D{}Y)
\end{equation*}
are $\PoissonPP{[\XPlusEps,\infty[,1,\ThinPoiPPRadiusMeasure}$-a.s. continuous in $\varepsilon=0$ and dominated, thanks to~\eqref{eq_papangelou_bounded_uniform} by  integrable functions.
Using a standard continuity theorem and a one-sided version of the \emph{Lebesgue differentiation theorem}~\cite[Thm 5.6.2]{Bogachev__MeasureTheory__Springer_2007}, we obtain
\begin{align*}
 &
 \lim_{\varepsilon\to{}0}
 \frac{\GibbsPPActivity e^{-\GibbsPPActivity \varepsilon }}{\varepsilon}
 \int_{[X,\XPlusEps[}
 \int
  e^{-\Hamiltonian{}(\Real{}\cup{} Y \mid{} \BC{}')}
  \PoissonPP{[\XPlusEps,\infty[,\GibbsPPParams}(\D{}\Real{})
  \GibbsPPActivity{}e^{-\GibbsPPActivity\varepsilon}
  \LinearizedMeasure(\D{}Y)
 \\\stackrel{}{=}{}
 &
 \int
  \lim_{\varepsilon\to{}0}
   \frac{\GibbsPPActivity }{\varepsilon}
   \Indicator{\Real{[X,\XPlusEps[}=\emptyset}
   e^{-\Hamiltonian{}(\Real{} \mid{} \BC{}')}
   \int_{[X,\XPlusEps[}
    e^{-\Hamiltonian{}( Y \mid{} \Real{} \cup{} \BC{}') }
    \LinearizedMeasure(\D{}Y)
 \PoissonPP{]X,\infty[,\GibbsPPParams}(\D{}\Real{})
 \\\stackrel{}{=}{}
 &
 \GibbsPPActivity
 \int
  e^{-\Hamiltonian{}(\Real{} \mid{} \BC{}')}
  e^{-\Hamiltonian{}(X \mid{} \Real{} \cup{} \BC{}') }
 \PoissonPP{]X,\infty[,\GibbsPPParams}(\D{}\Real{})
 \\\stackrel{}{=}{}
 &
 \GibbsPPActivity
 e^{- \Hamiltonian{}( X \mid{} \BC{}') }
 \int
  e^{ - \Hamiltonian{}(\Real{} \mid{} \BC{}' \cup{} X)}
  \PoissonPP{]X,\infty[,\GibbsPPParams}(\D{}\Real{})
 \\\stackrel{}{=}{}
 &
 \GibbsPPActivity
 e^{-\Hamiltonian{}(X\mid{}\BC{}')}
 \PartFunPoisson{\GibbsPPActivity,]X,\infty[,\BC{}'\cup{}X}
 \,.
\end{align*}
Adding the right and left terms of this case together with the zeros from the other two cases  gives~\eqref{eq_derivative_partfun}, which shows~\eqref{eq_derivative_quotient_equality}.

\subsection{Construction of the disagreement coupling}
\label{sec_dac_construction}

Let $\SDom\in\ProjectionBoundedBorelSets$ and $\BC{}^1,\BC{}^2\in\Conf{\Complement{\SDom}}$.
We thin a Poisson PP to two conditionally independent copies of the Gibbs PP.
This coupling
\footnote{The superscript ``thin2'' stands for ``joint thinning to two Gibbs PPs''.}
on $(\Conf{\SDom}^3,\ProductAlgebra{\SDom}{3})$ is given by
\begin{equation}
\label{eq_thintwo_janossy}
 \CouplingThinTwo{\SDom,\BC{}^1,\BC{}^2}
 (\D{}(\Real{}^1,\Real{}^2,\Real{}^3))
 :=
 \JointThinProba{\SDom,\BC{}^1}{\Real{}^1}{\Real{}^3}
 \JointThinProba{\SDom,\BC{}^2}{\Real{}^2}{\Real{}^3}
 \PoissonPP{\SDom,\ThinPoiPPParams}(\D{}\Real{}^3)
 \,.
\end{equation}
The coupling $\CouplingThinTwo{\SDom,\BC{}^1,\BC{}^2}$ fulfils everything in~\eqref{eq_dacf_properties} except the crucial~\eqref{eq_dacf_connected}.
Hence, our approach is to use $\CouplingThinTwo{\SDom,\BC{}^1,\BC{}^2}$ only where and when the boundary conditions have a direct influence and recurse until the boundary conditions have no influence.
In the no-influence case we thin to one Gibbs PP and identify it with the two target Gibbs PPs.
This recursive approach keeps the disagreement allows connected to some influence from the boundary conditions from the previous steps and ensures that~\eqref{eq_dacf_connected} holds.

\par
The following lines codify the influence of the boundary conditions in~\eqref{eq_zone_def},
describe the restricted use of $\CouplingThinTwo{\SDom,\BC{}^1,\BC{}^2}$ in~\eqref{eq_dac_def_zone} and give the recursive construction of a disagreement coupling in~\eqref{eq_dac_def_rec}.
The \emph{influence zone} is
\begin{equation}
\label{eq_zone_def}
 \InfluenceZone :=
 \DescSet{X\in{}\SDom}
  {\exists{}Y\in{}\BC{}^1\cup{}\BC{}^2:
   \Ball{X}\cap\Ball{Y}\not=\emptyset
  }
 \,.
\end{equation}
Define the joint Janossy intensity of the law
\footnote{The superscript ``da-zone'' stands for ``disagreement, influence zone case''.}
$\CouplingDisAgZone{\SDom,\BC{}^1,\BC{}^2}$
on $(\Conf{\InfluenceZone}^3,\ProductAlgebra{\InfluenceZone}{3})$ by
\begin{subequations}
\label{eq_dac_def}
\begin{equation}
\label{eq_dac_def_zone}
\begin{aligned}
 \CouplingDisAgZone{\SDom,\BC{}^1,\BC{}^2}
  (\D{}(\Real{}^1,\Real{}^2,\Real{}^3))
 :=
 \CouplingThinTwo{\SDom,\BC{}^1,\BC{}^2}
 (\D{}(\Real{\InfluenceZone}^1,\Real{\InfluenceZone}^2,\Real{\InfluenceZone}^3))
 \,.
\end{aligned}
\end{equation}
Recall that $\Indicator{.}$ is the indicator function.
Define the joint Janossy intensity of the law
\footnote{The superscript ``da-rec'' stands for ``disagreement, recursive case''.}
$\CouplingDisAgRec{\SDom,\BC{}^1,\BC{}^2}$ on $(\Conf{\SDom}^3,\ProductAlgebra{\SDom}{3})$ recursively by
\begin{multline}
\label{eq_dac_def_rec}
\CouplingDisAgRec{\SDom,\BC{}^1,\BC{}^2}
 (\D{}(\Real{}^1,\Real{}^2,\Real{}^3))
 \\  :=
 \Indicator{\InfluenceZone=\emptyset}
 \Indicator{\Real{}^1=\Real{}^2}
 \CouplingThinOne{\SDom,\emptyset}(\D{}(\Real{}^1,\Real{}^3))
 \\  +
 \Indicator{\InfluenceZone\not=\emptyset}
 \CouplingDisAgZone{\SDom,\BC{}^1,\BC{}^2}
 (\D{}(\Real{\InfluenceZone}^1,\Real{\InfluenceZone}^2,\Real{\InfluenceZone}^3))
 \\
 \times
 \CouplingDisAgRec
  {\SDom\setminus\InfluenceZone
  ,\BC{}^1\cup\Real{\InfluenceZone}^1
  ,\BC{}^2\cup\Real{\InfluenceZone}^2
  }(\D{}(\Real{\SDom\setminus\InfluenceZone}^1,
  \Real{\SDom\setminus\InfluenceZone}^2,\Real{\SDom\setminus\InfluenceZone}^3))
 \,.
\end{multline}
\end{subequations}

\begin{Prop}
\label{prop_dac_properties}
The coupling $\CouplingDisAgRec{\SDom,\BC{}^1,\BC{}^2}$ fulfils~\eqref{eq_dacf_properties}.
Further, it is jointly measurable in the boundary conditions $(\BC{}^1,\BC{}^2)$.
\end{Prop}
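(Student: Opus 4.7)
The plan is to establish the four items of~\eqref{eq_dacf_properties}, together with measurability in $(\BC{}^1,\BC{}^2)$, simultaneously by induction on the depth of the recursion~\eqref{eq_dac_def_rec}. First I would check that the recursion almost surely terminates. At each step the spatial domain shrinks from $\SDom$ to $\SDom\setminus\InfluenceZone$, and by the very definition of $\InfluenceZone$ no ball of $\BC{}^1\cup\BC{}^2$ reaches into $\SDom\setminus\InfluenceZone$. Hence the influence zone in the next step is generated only by the newly added points $\Real{\InfluenceZone}^1\cup\Real{\InfluenceZone}^2$, and since a Poisson PP on the bounded domain $\SDom$ has finitely many points almost surely, only finitely many points can ever be added. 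After finitely many steps the influence zone is empty and the base case applies. Measurability of $\CouplingDisAgRec{\SDom,\BC{}^1,\BC{}^2}$ in the boundary conditions follows from the same induction: $\InfluenceZone$ is measurable in $(\BC{}^1,\BC{}^2)$, the kernel $\CouplingThinTwo{\SDom,\BC{}^1,\BC{}^2}$ inherits measurability from Proposition~\ref{prop_thinning}, and $\CouplingThinOne{\SDom,\emptyset}$ does not depend on the boundary at all.

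Next I would verify the distributional properties. The Poisson marginal~\eqref{eq_dacf_poi} and the domination~\eqref{eq_dacf_domination} are straightforward: both $\CouplingThinOne$ and $\CouplingThinTwo$ have third marginal $\PoissonPP{\cdot,\ThinPoiPPParams}$ and satisfy $\xi^1\cup\xi^2\subseteq\xi^3$ on each piece, and Poisson PPs on disjoint domains glue together. The Gibbs marginals~\eqref{eq_dacf_gibbs} require the DLR equations: by construction and by~\eqref{eq_hamiltonian_locality}, the $i$-th marginal on $\InfluenceZone$ is $\GibbsPP{\InfluenceZone,\BC{}^i}$, while the induction hypothesis provides $\GibbsPP{\SDom\setminus\InfluenceZone,\BC{}^i\cup\Real{\InfluenceZone}^i}$ on $\SDom\setminus\InfluenceZone$; Hamiltonian additivity~\eqref{eq_hamiltonian_additivity} then combines these into $\GibbsPP{\SDom,\BC{}^i}$ as required.

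The main obstacle is the connection property~\eqref{eq_dacf_connected}, which motivates the influence-zone construction. Let $X\in\xi^1\SymDiff\xi^2$; by~\eqref{eq_dacf_domination} it lies in $\xi^3$. If $X\in\InfluenceZone$, then by definition of $\InfluenceZone$ there is some $Y\in\BC{}^1\cup\BC{}^2$ with $\Ball{X}\cap\Ball{Y}\not=\emptyset$, which directly gives $X\ConnectedIn{\xi^3}\BC{}^1\cup\BC{}^2$. If instead $X\in\SDom\setminus\InfluenceZone$, then the induction hypothesis supplies a path in $\GilbertGraph{\xi^3_{\SDom\setminus\InfluenceZone}}$ from $X$ to $\BC{}^1\cup\BC{}^2\cup\Real{\InfluenceZone}^1\cup\Real{\InfluenceZone}^2$; if its endpoint already lies in $\BC{}^1\cup\BC{}^2$ we are done, otherwise the endpoint lies in $\Real{\InfluenceZone}^i\subseteq\InfluenceZone$ and the definition of $\InfluenceZone$ provides one further edge reaching $\BC{}^1\cup\BC{}^2$. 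Concatenating the pieces yields $X\ConnectedIn{\xi^3}\BC{}^1\cup\BC{}^2$, closing the induction.
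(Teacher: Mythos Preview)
Your overall strategy matches the paper's, and your treatment of measurability and of~\eqref{eq_dacf_connected} is fine (indeed more detailed than the paper's). There are, however, two genuine gaps.

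\textbf{The $\InfluenceZone$-marginal is misidentified.} You claim that the $i$-th marginal of $\CouplingDisAgZone{\SDom,\BC{}^1,\BC{}^2}$ on $\InfluenceZone$ equals $\GibbsPP{\InfluenceZone,\BC{}^i}$. This is not correct: by~\eqref{eq_dac_def_zone}, $\CouplingDisAgZone{}$ is the projection to $\InfluenceZone$ of $\CouplingThinTwo{\SDom,\BC{}^1,\BC{}^2}$, whose $i$-th marginal is $\GibbsPP{\SDom,\BC{}^i}$; hence the $\InfluenceZone$-marginal is $\GibbsPP{\SDom,\BC{}^i}(\xi_{\InfluenceZone}\in\cdot)$, not $\GibbsPP{\InfluenceZone,\BC{}^i}$. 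These differ precisely because points in $\SDom\setminus\InfluenceZone$ can interact with points in $\InfluenceZone$; locality~\eqref{eq_hamiltonian_locality} does not help here since those configurations may well be connected. As a consequence, your ``combine via Hamiltonian additivity'' step fails: the product $\GibbsPP{\InfluenceZone,\BC{}^i}(\D\Real{\InfluenceZone})\,\GibbsPP{\SDom\setminus\InfluenceZone,\BC{}^i\cup\Real{\InfluenceZone}}(\D\Real{\SDom\setminus\InfluenceZone})$ has a normalising constant $\PartFunPoisson{\GibbsPPActivity,\InfluenceZone,\BC{}^i}\PartFunPoisson{\GibbsPPActivity,\SDom\setminus\InfluenceZone,\BC{}^i\cup\Real{\InfluenceZone}}$ depending on $\Real{\InfluenceZone}$, so it is not $\GibbsPP{\SDom,\BC{}^i}$. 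The fix is simple: keep the correct marginal $\GibbsPP{\SDom,\BC{}^i}(\xi_{\InfluenceZone}\in\cdot)$ and invoke the DLR equation~\eqref{eq_dlr} directly, which says exactly that this projection composed with the conditional law $\GibbsPP{\SDom\setminus\InfluenceZone,\BC{}^i\cup\Real{\InfluenceZone}}$ reproduces $\GibbsPP{\SDom,\BC{}^i}$. Locality~\eqref{eq_hamiltonian_locality} is needed only in the \emph{base case} $\InfluenceZone=\emptyset$, to identify $\GibbsPP{\SDom,\emptyset}$ with $\GibbsPP{\SDom,\BC{}^i}$.

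\textbf{Termination.} Your argument appeals to ``a Poisson PP on the bounded domain $\SDom$'' being a.s.\ finite, but the recursion samples a \emph{fresh} Poisson PP on each successive $\InfluenceZone$; there is no single PP on $\SDom$ until you have already shown the recursion terminates and the pieces glue (which is~\eqref{eq_dacf_poi}). One can rescue your idea by coupling all steps to a single exploration of one Poisson PP on $\SDom$ via the spatial independence of the Poisson PP, but this has to be said. The paper instead observes that at each step, conditionally on the past, the event $\{\xi^3_{\InfluenceZone}=\emptyset\}$ (which forces termination at the next step) has probability at least $e^{-\ThinPoiPPIntensity\LinearizedMeasure(\SDom)}>0$, so the recursion depth is dominated by a geometric random variable.
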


\begin{proof}
The first step is to check the termination of the recursion in~\eqref{eq_dac_def_rec}.
The recursion is made with respect to the influence zone $\InfluenceZone$, which is decreasing and whose volume is bounded by $\LinearizedMeasure(\SDom)$.
The recursion stops when no Gibbs point (of $\xi^1$ and $\xi^2$) is placed in $\InfluenceZone$.
This happens in particular when there is no Poisson point (of $\xi^3$) in the influence zone $\InfluenceZone$.
At each step of the recursion, this happens independently with probability bounded from below by $e^{-\ThinPoiPPIntensity\LinearizedMeasure(\SDom)}$.
Therefore, the recursion stops after an almost-surely finite number of steps.
\par
The next step is to show the measurability in the boundary conditions.
Proposition~\ref{prop_thinning} asserts that $ \JointThinProba{\SDom,\BC{}^1}{\Real{}^1}{\Real{}^3}$ and $\JointThinProba{\SDom,\BC{}^2}{\Real{}^2}{\Real{}^3}$ are measurable in $\BC{}^1$ and $\BC{}^2$ respectively.
Hence, the coupling $\CouplingDisAgRec{\SDom,\BC{}^1,\BC{}^2}$ is jointly measurable in the boundary conditions $(\BC{}^1,\BC{}^2)$.
The measurability is needed for the well-definedness of the recursive definition~\eqref{eq_dac_def_rec} and the proof of~\eqref{eq_dacf_properties}.
\par
Finally, we show that~\eqref{eq_dacf_properties} holds.
Equation~\eqref{eq_dacf_gibbs} is a straightforward consequence of the DLR equations~\eqref{eq_dlr} and the assumption~\eqref{eq_hamiltonian_locality}.
Properties~\eqref{eq_dacf_poi} and~\eqref{eq_dacf_domination} are also a straightforward consequence of the construction.
Concerning~\eqref{eq_dacf_connected}, the only points of the Poisson configuration $\xi^3$ which are not connected (in the Gilbert graph $\GilbertGraph{\xi^3}$) to the boundary conditions $\BC{}^1 \cup \BC{}^2$ are the ones sampled at the end of the recursion, when $\InfluenceZone = \emptyset$.
These points thin to both Gibbs PPs $\xi^1$ and $\xi^2$ identically, as outlined in the $\InfluenceZone=\emptyset$ case of~\eqref{eq_dac_def_rec}.
By construction~\eqref{eq_dac_def_zone}, from those points the ones belonging to the first Gibbs configuration $\xi^1$ also belong to the second Gibbs configuration $\xi^2$.
Therefore, the only points where the two Gibbs configurations may differ are the ones sampled when the influence zone is not empty.
By~\eqref{eq_dac_def_zone}, such points are connected to the boundary conditions $\BC{}^1 \cup \BC{}^2$.
\end{proof}

\section{Disagreement percolation proofs}
\label{sec_dap_proofs}

\par
This section contains the proofs of  Theorem~\ref{thm_unique_gibbs} and Theorem~\ref{thm_correlations}.

\par
Let $\Domein_1,\Domein_2\in\BoundedBorelSets$ with $\Domein_1\subsetneq\Domein_2$ and $\BC{}^1,\BC{}^2 \in \Conf{\Domein_2^c}$, as well as an event $E\in\SigmaAlgebra{\Domein_1}$.
Property~\eqref{eq_dacf_gibbs} introduces the disagreement coupling to express the difference between the probabilities of two Gibbs states for $E$.
\begin{multline}
\label{eq_disagreement_intro}
 \Modulus{
  \GibbsPP{\Domein_2,\BC{}^1}(E)
  -
  \GibbsPP{\Domein_2,\BC{}^2}(E)
 }
 \\
 \stackrel{}{\leq}{}
 \max\LREnumSet{
  \Dacf{\Domein_2,\BC{}^1,\BC{}^2}
  (\xi_{\Domein_1}^1 \in E, \xi_{\Domein_1}^2 \not \in E)
  ,
  \Dacf{\Domein_2,\BC{}^1,\BC{}^2}
  (\xi_{\Domein_1}^2 \in E, \xi_{\Domein_1}^1 \not \in E)
 }
 \,.
\end{multline}
The other properties of a disagreement coupling~\eqref{eq_dacf_properties} allow to bound each of the above terms by the same percolation probability.
The connection event is increasing and its probability increases under the dominating Poisson PP.
We only show the first case.
\begin{equation}
\label{eq_disagreement_classic_steps}
\begin{aligned}
 \Dacf{\Domein_2,\BC{}^1,\BC{}^2}
  (\xi_{\Domein_1}^1 \in E, \xi_{\Domein_1}^2 \not \in E)
 \stackrel{}{\leq}{}
 &
 \Dacf{\Domein_2,\BC{}^1,\BC{}^2}
 (\xi_{\Domein_1}^1\SymDiff{}\xi_{\Domein_1}^2
  \not = \emptyset )
 \\
 \stackrel{}{\leq}{}
 &
 \Dacf{\Domein_2,\BC{}^1,\BC{}^2}
 (\Domein_1
  \ConnectedIn{\xi^3}
  \BC{}^1\cup{}\BC{}^2
 )
 \\
 \stackrel{}{=}{}
 &
 \PoissonPP{\Domein_2,\DacPoiPPParams}
 (\Domein_1
  \ConnectedIn{\xi}
  \BC{}^1\cup{}\BC{}^2
 )
 \,.
\end{aligned}
\end{equation}
Hence,
\begin{equation}
\label{eq_disagreement_classic_bound}
 \Modulus{
  \GibbsPP{\Domein_2,\BC{}^1}(E)
  -
  \GibbsPP{\Domein_2,\BC{}^2}(E)
 }
 \le
 \PoissonPP{\Domein_2,\DacPoiPPParams}
  (\Domein_1\ConnectedIn{\xi}\BC{}^1\cup{}\BC{}^2)
 \,.
\end{equation}

\subsection{Proof of Theorem~\ref{thm_unique_gibbs}}
\label{sec_thm_unique_gibbs_proof}

Let $\GenericPP{1}{},\GenericPP{2}{} \in \GibbsStates$.
We want to prove that $\GenericPP{1}{} = \GenericPP{2}{}$.
Let $\emptyset\not=\Domein\in\BoundedBorelSets$ and $E\in\SigmaAlgebra{\Domein}$.
For $n\in\NN$, consider the closed ball $\Domein_n := \Ball{0,n}$ in $\RRd$.
Let $\GenericPP{1\TensorTimes{}2}{n}:=\GenericPP{1}{\Complement{\Domein_n}}\TensorTimes\GenericPP{2}{\Complement{\Domein_n}}$.
For $n$ large enough, $\Domein\subseteq\Domein_n$.
The DLR equation~\eqref{eq_dlr} for $\Domein_n$ implies that
\begin{equation*}
 \Modulus{\GenericPP{1}{}(E) - \GenericPP{2}{}(E)}
 \leq
 \int
 \Modulus{
  \GibbsPP{\Domein_n,\BC{}^1}(E)
  -
  \GibbsPP{\Domein_n,\BC{}^2}(E)
 }
 \GenericPP{1\TensorTimes{}2}{n}(\D{}(\BC{}^1,\BC{}^2))
 \,.
\end{equation*}
Applying~\eqref{eq_disagreement_classic_bound} yields
\begin{equation*}
 \Modulus{\GenericPP{1}{}(E) - \GenericPP{2}{}(E)}
 \leq{}
 \int
  \PoissonPP{\Domein_n,\DacPoiPPParams}
   (\Domein\ConnectedIn{\xi}\BC{}^1\cup\BC{}^2 )
  \GenericPP{1\TensorTimes{}2}{n}(\D{}(\BC{}^1,\BC{}^2))
 \,.
\end{equation*}
As we are in the sub-critical regime of the Boolean model, we expect the integrated probability to converge to $0$ as $n$ grows to infinity.
Unfortunately, this convergence depends on the outside configurations $\BC{}^1, \BC{}^2 $ and we need uniform convergence.

Let $\varepsilon >0$.
Since the integrated event is increasing in $\BC{}^1$ and $\BC{}^2$, the stochastic domination of both $\GenericPP{1}{}$ and $\GenericPP{2}{}$ by $\PoissonPP{\DacPoiPPParams}$ implies that
\begin{multline*}
 \Modulus{\GenericPP{1}{}(E) - \GenericPP{2}{}(E)}
 \leq
 \int
 \PoissonPP{\Domein_n,\DacPoiPPParams}
  (\Domein\ConnectedIn{\xi}\BC{}^1\cup\BC{}^2)
 \PoissonPP{\Complement{\Domein_n},\DacPoiPPParams}
 \TensorTimes
 \PoissonPP{\Complement{\Domein_n},\DacPoiPPParams}
 (\D{}(\BC{}^1,\BC{}^2))
 \,.
\end{multline*}

The following lemma shows how to control with high probability the radii in a Boolean model.
\begin{Lem}
\label{lem_disconnect}
For a positive integer $k$, let
\begin{align*}
 \Upsilon_k :=
 \LRDescSet
  {\Real{} \in \Conf{}}
  {\forall (x,r) \in \Real{},r \leq \frac{\DistanceBetween{x}{0}}{2} + k}
  \,.
\end{align*}
If $\RadiusMeasure$ satisfies the integrability assumption $\IntCondRad{\RadiusMeasure}< \infty$ and $k$ is large enough, then
\begin{align}
 \PoissonPP{\alpha,\GibbsPPRadiusMeasure}(\Upsilon_k)
 \geq 1 - \varepsilon
 \,.
\end{align}

\end{Lem}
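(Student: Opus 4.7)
The plan is to directly bound the probability of the complement event by the expected number of ``bad'' points and then use dominated convergence with the integrability assumption $\IntCondRad{\RadiusMeasure}<\infty$.

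First, I would observe that $\Upsilon_k^c$ is exactly the event that the configuration contains at least one point $(x,r)$ in the region
\begin{equation*}
 A_k := \DescSet{(x,r)\in\StateSpace}{r > \DistanceBetween{x}{0}/2 + k}
 \,.
\end{equation*}
Since $\PoissonPP{\alpha,\GibbsPPRadiusMeasure}$ has intensity measure $\alpha\Leb\TensorTimes\GibbsPPRadiusMeasure$, the number of points of $\xi$ falling in $A_k$ is Poisson distributed, so
\begin{equation*}
 \PoissonPP{\alpha,\GibbsPPRadiusMeasure}(\Upsilon_k^c)
 \leq
 \alpha \int_{\RRplus} \int_{\RRd} \Indicator{r > \DistanceBetween{x}{0}/2 + k} \,\D{}x\,\GibbsPPRadiusMeasure(\D{}r)
 \,.
\end{equation*}

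Next, I would apply Fubini and compute the inner integral for fixed $r$: the set $\DescSet{x\in\RRd}{\DistanceBetween{x}{0} < 2(r-k)}$ is empty when $r\le k$ and is the Euclidean ball of radius $2(r-k)$ otherwise. Denoting by $c_\Dimension$ the volume of the unit ball in $\RRd$, this gives
\begin{equation*}
 \PoissonPP{\alpha,\GibbsPPRadiusMeasure}(\Upsilon_k^c)
 \leq
 \alpha\, 2^\Dimension c_\Dimension \int_{(k,\infty)} (r-k)^\Dimension \,\GibbsPPRadiusMeasure(\D{}r)
 \leq
 \alpha\, 2^\Dimension c_\Dimension \int_{\RRplus} r^\Dimension \Indicator{r>k} \,\GibbsPPRadiusMeasure(\D{}r)
 \,.
\end{equation*}

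Finally, I would conclude by dominated convergence: the integrand $r^\Dimension \Indicator{r>k}$ tends to $0$ pointwise in $r$ as $k\to\infty$ and is dominated by $r^\Dimension$, which is $\GibbsPPRadiusMeasure$-integrable by the assumption $\IntCondRad{\GibbsPPRadiusMeasure}<\infty$. Hence the right-hand side tends to $0$, and for $k$ large enough the bound is below $\varepsilon$. There is no serious obstacle here; the only point requiring care is the explicit volume computation for the $x$-slice and invoking dominated convergence with the right dominating function.
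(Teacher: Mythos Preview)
Your proposal is correct and follows the natural first-moment argument: bound $\PoissonPP{\alpha,\GibbsPPRadiusMeasure}(\Upsilon_k^c)$ by the mean number of bad points via Campbell--Mecke, compute the spatial slice as a ball of radius $2(r-k)$, and conclude by dominated convergence against $r^\Dimension$. The paper itself does not spell out a proof but only cites it as an adaptation of \cite[Lemma~3.3]{Dereudre__TheExistenceOfQuermassInteractionProcessesForNonlocallyStableInteractionAndNonboundedConvexGrains__AAP_2009}; your argument is exactly the kind of computation that reference contains, so there is no meaningful divergence in approach.
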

This proof is an adaptation of~\cite[Lemma 3.3]{Dereudre__TheExistenceOfQuermassInteractionProcessesForNonlocallyStableInteractionAndNonboundedConvexGrains__AAP_2009}.
If $X\in\BC{} ^1 \cup \BC{} ^2 \in \Upsilon_k$, then
$\Ball{X} \cap \Ball{0,n/2 -k-1} = \emptyset$.
Therefore, for large enough $k$, we have
\begin{align*}
 \Modulus{\GenericPP{1}{} (E) - \GenericPP{2}{}(E)}
 &\leq
\varepsilon +
 \PoissonPP{\Domein_n,\DacPoiPPParams}
  (\Domein\ConnectedIn{\xi }\Ball{0,n/2-k-1})
 \,.
\end{align*}
Using~\eqref{eq_boolean_subcrit_connect_vanish} from Theorem~\ref{thm_percolation_boolean}, for large enough $n$, we have
\begin{align*}
 \Modulus{\GenericPP{1}{} (E) - \GenericPP{2}{}(E)}
 \leq 2 \varepsilon
 \,.
\end{align*}
Letting $\varepsilon$ tend to $0$ shows that $\GenericPP{1}{}=\GenericPP{2}{}$.

\subsection{Proof of Theorem~\ref{thm_correlations}}
\label{sec_thm_correlations_proof}

\par
Recall that $\DacPoiPPRadiusMeasure$ has bounded support, i.e.,
$\DacPoiPPRadiusMeasure([0,r_0])=1$, for some $r_0\in\RRplus$.
As introduced in Section~\ref{sec_space}, $\EuclideanDistance$ is the Euclidean distance between points and/or sets.

\par
First, we prove~\eqref{eq_correlations_spec}.
The DLR equations~\eqref{eq_dlr} let us localise in
\begin{multline*}
 \LRModulus{
  \GibbsPP{\Domein_2, \BC{}} (\xi_{\Domein_1}\in{}E)
  -
  \GenericPP{}{}(\xi_{\Domein_1}\in{}E)
 }
 \\=
 \LRModulus{
   \GibbsPP{\Domein_2, \BC{}} (\xi_{\Domein_1}\in{}E)
   -
   \int
    \GibbsPP{\Domein_2,\BC{}'} (\xi_{\Domein_1}\in{}E)
    \GenericPP{}{}(\xi _{\Domein_2^c}=\D{} \BC{}')
  }
 \\\le
 \int
  \LRModulus{
   \GibbsPP{\Domein_2,\BC{}} (\xi_{\Domein_1}\in{}E)
   -
   \GibbsPP{\Domein_2,\BC{}'}(\xi_{\Domein_1}\in{}E)
  }
  \GenericPP{}{}(\xi _{\Domein_2^c}=\D{} \BC{}')
 \,.
\end{multline*}

For $\Domein \subseteq \RR^\Dimension$, let $\Domein^{\ominus}:= \DescSet{x \in \RR^\Dimension}{\DistanceBetween{x}{\Domein^c}\geq r_0}$ be the reduced set of points not closer than $r_0$ to  the boundary of $\Domein$.
Using~\eqref{eq_disagreement_classic_bound}, we get
\begin{equation*}
\label{eq_proof_correlation_first_2}
\begin{aligned}
 \LRModulus{
  \GibbsPP{\Domein_2, \BC{}} (\xi_{\Domein_1}\in{}E)
  -
  \GenericPP{}{}(\xi_{\Domein_1}\in{}E)
 }
 \stackrel{}{\leq}{}
 &\int
   \PoissonPP{\Domein_2,\DacPoiPPParams}
    (\Domein_1\ConnectedIn{\xi}\BC{}\cup\BC{}')
   \GenericPP{}{}(\xi _{\Domein_2^c}=\D{} \BC{}')
 \\\stackrel{}{\leq}{}
 &\PoissonPP{\Domein_2,\DacPoiPPParams}
   (\Domein_1\ConnectedIn{\xi}(\Domein_2^{\ominus})^c)
 \,.
\end{aligned}
\end{equation*}
Applying~\eqref{eq_boolean_subcrit_exp_decay} from Theorem~\ref{thm_percolation_boolean} results in
\begin{equation*}
 \LRModulus{
  \GibbsPP{\Domein_2, \BC{}} (\xi_{\Domein_1}\in{}E)
  -
  \GenericPP{}{}(\xi_{\Domein_1}\in{}E)
 }
 \leq
 K \exp(- \kappa [\DistanceBetween{\Domein_1}{\Domein_2^c} -  r_0] )
 \,.
\end{equation*}
Setting $K':=Ke^{\kappa{}r_0}$, we obtain~\eqref{eq_correlations_spec}.

\par
Second, we show~\eqref{eq_correlation_event}.
Let $\Domein\in\BoundedBorelSets$ contain $\Domein_1 \cup \Domein_2$ such that
$\DistanceBetween{\Domein_1 \cup \Domein_2}{\Domein^c} \geq
\DistanceBetween{\Domein_1}{\Domein_2}$ and let
$\Domein_3 := \Domein\setminus\Domein_2 $.
Thus, we have $\DistanceBetween{\Domein_1}{\Domein_3^c}
\geq \DistanceBetween{\Domein_1}{\Domein_2}$.
The DLR equations~\eqref{eq_dlr} let us localise to $\Domein_1$ and apply~\eqref{eq_correlation_event} in
\begin{equation}
\label{eq_proof_correlation_second}
\begin{aligned}
 &\BigModulus{
   \GenericPP{}{}(\xi_{\Domein_1\cup\Domein_2}\in{}E_1\cap{}E_2)
   -
   \GenericPP{}{}(\xi_{\Domein_1}\in{}E_1)
   \GenericPP{}{}(\xi_{\Domein_2}\in{}E_2)
  }
 \\\stackrel{}{=}{}
 &\LRModulus{
   \int
   \Indicator{\BC{\Domein_2}\in{}E_2}
   \left(
    \GibbsPP{\Domein_3,\BC{}}(\xi_{\Domein_1}\in{}E_1)
    -
    \GenericPP{}{}(\xi_{\Domein_1}\in{}E_1)
   \right)
   \GenericPP{}{\Domein_3^c} (\D{} \BC{})
  }
 \\\stackrel{}{\leq}{}
 &
  \int
  \LRModulus{
   \GibbsPP{\Domein_3, \BC{}} (\xi_{\Domein_1}\in{}E)
   -
   \GenericPP{}{}(\xi_{\Domein_1}\in{}E)
  }
  \GenericPP{}{\Domein_3^c} (\D{}\BC{})
 \\
 \stackrel{}{\leq}{}
 & K'\exp(-\kappa\,\DistanceBetween{\Domein_1}{\Domein_2})
 \,.
\end{aligned}
\end{equation}

Finally, we prove~\eqref{eq_correlation_pair}.
For this, we need to improve~\eqref{eq_disagreement_classic_bound} for an increasing event $E\in\SigmaAlgebra{\Domein_1}$.
Starting from~\eqref{eq_disagreement_intro}, we redo~\eqref{eq_disagreement_classic_steps}.
We keep $E$ and use the fact that both $E$ and the connection event, and so its intersection, are increasing events.
Of course the properties~\eqref{eq_dacf_properties} are used, too.
\begin{align*}
 \Dacf{\Domein_2,\BC{}^1,\BC{}^2}
  (\xi_{\Domein_1}^1 \in E, \xi_{\Domein_1}^2 \not \in E)
 \stackrel{}{\leq}{}
  &
 \Dacf{\Domein_2,\BC{}^1,\BC{}^2}
 (\xi_{\Domein_1}^3 \in E,
  \Domein_1
  \ConnectedIn{\xi^3
  }  \BC{}^1 \cup \BC{}^2  )
   \\  \stackrel{}{\leq}{}  &
 \Dacf{\Domein_2,\BC{}^1,\BC{}^2}
 (\xi_{\Domein_1}^3 \in E,
  \Domein_1^\oplus
  \ConnectedIn{\xi^3_{\Domein_2\setminus\Domein_1}
  }  (\Domein_2^{\ominus})^c  )
 \\  \stackrel{}{=}{}  &
 \PoissonPP{\Domein_1,\DacPoiPPParams}(E)
 \PoissonPP{\Domein_2\setminus\Domein_1,\DacPoiPPParams}
 (\Domein_1^\oplus
  \ConnectedIn{\xi^3_{\Domein_2\setminus\Domein_1}
  }  (\Domein_2^{\ominus})^c )
   \\  \stackrel{}{\leq}{}  &
 \PoissonPP{\Domein_1,\DacPoiPPParams}(E)
 \PoissonPP{\Domein_2,\DacPoiPPParams}
 (\Domein_1^\oplus
  \ConnectedIn{\xi^3}  (\Domein_2^{\ominus})^c )
 \,,
\end{align*}
where
$\Domein_1^{\oplus}:= \DescSet{x \in \RR^\Dimension}{\DistanceBetween{x}{\Domein_1} \leq r_0}$
is the augmented set containing points not further than $r_0$ from $\Domein_1$.
By applying~\eqref{eq_boolean_subcrit_exp_decay} and increasing the value of $K'$, we arrive at
\begin{equation}
\label{eq_disagreement_increasing_exp}
 \Modulus{
  \GibbsPP{\Domein_1,\BC{}^1}(E)
  -
  \GibbsPP{\Domein_1,\BC{}^2}(E)
 }
 \leq
 \PoissonPP{\Domein_1,\DacPoiPPParams}(E)
 K' \exp(-\kappa\,\DistanceBetween{\Domein_1}{\Domein_2^c })
 \,.
\end{equation}
Retracing~\eqref{eq_proof_correlation_second} with~\eqref{eq_disagreement_increasing_exp} instead of~\eqref{eq_disagreement_classic_bound} yields, for disjoint $\Domein_1,\Domein_2 \in\BoundedBorelSets$,
\begin{multline*}
 \BigModulus{
  \GenericPP{}{}(
   \Cardinality{\xi_{\Domein_1}}\geq n_1,
   \Cardinality{\xi_{\Domein_2}}\geq n_2
   )
  -
  \GenericPP{}{}(\Cardinality{\xi_{\Domein_1}}\geq n_1)
  \GenericPP{}{}(\Cardinality{\xi_{\Domein_2}}\geq n_2)
 }
 \\
 \leq
 K' \exp(-\kappa\,\DistanceBetween{\Domein_1}{\Domein_2})
 \PoissonPP{\Domein_1,\DacPoiPPParams}(\Cardinality{\xi}\geq n_1)
 \PoissonPP{\Domein_2,\DacPoiPPParams}(\Cardinality{\xi}\geq n_2)
 \,.
\end{multline*}
Writing $\Expect{\GenericPP{}{}}$ for the expectation under $\GenericPP{}{}$, the difference between the moments is bounded as
\begin{align*}
 &
 \BigModulus{
  \Expect{\GenericPP{}{}}
   \Cardinality{\xi_{\Domein_1}}
   \Cardinality{\xi_{\Domein_2}}
  -
  \Expect{\GenericPP{}{}}\Cardinality{\xi_{\Domein_1}}
  \Expect{\GenericPP{}{}}\Cardinality{\xi_{\Domein_2}}
 }
 \\\stackrel{}{\le}{}
 &
 \sum_{n_1,n_2 \geq 1}
 \BigModulus{
  \GenericPP{}{}(
   \Cardinality{\xi_{\Domein_1}}\geq n_1,
   \Cardinality{\xi_{\Domein_2}}\geq n_2
   )
   -
  \GenericPP{}{}(\Cardinality{\xi_{\Domein_1}}\geq n_1)
  \GenericPP{}{}(\Cardinality{\xi_{\Domein_2}}\geq n_2)
 }
 \\\stackrel{}{\le}{}
 &
 \sum_{n_1,n_2 \geq 1}
  K' \exp(-\kappa\,\DistanceBetween{\Domein_1}{\Domein_2})
  \PoissonPP{\Domein_1,\DacPoiPPParams}(\Cardinality{\xi}\geq n_1)
  \PoissonPP{\Domein_2,\DacPoiPPParams}(\Cardinality{\xi}\geq n_2)
 \\\stackrel{}{=}{}
 &
 \DacPoiPPIntensity^2 \Leb(\Domein_1)\Leb(\Domein_2)
 K' \exp(-\kappa\,\DistanceBetween{\Domein_1}{\Domein_2})
 \,.
\end{align*}
The result follows by disintegrating with respect to $\Leb^2$.

\section*{Acknowledgements}

This work was supported in part by the GDR 3477 ``Géométrie stochastique'', the ANR "Percolation et percolation de premier passage" (ANR-16-CE40-0016) and the Labex CEMPI (ANR-11-LABX-0007-01).

Both authors thank the anonymous referees for their work.
The first author thanks Marie-Colette van Lieshout for being always helpful with the general theory of point processes and the GDR 3477 for organising the workshop on continuum percolation in 2016 and the SGSIA 2017, both of which made this work possible.
The second author thanks the CWI for its hospitality.

We thank G. Last for pointing out an error in the published version fixed in the arXiv version.

\bibliographystyle{plain}
\bibliography{ref}

\end{document}